\theoremstyle{plain}
\newtheorem{myth}{Theorem}[section]
\newtheorem{mylm}[myth]{Lemma}
\newtheorem{mypr}[myth]{Proposition}
\theoremstyle{remark}
\newtheorem{mydef}[myth]{Definition}
\newcommand{\mc}{\mathcal}
\numberwithin{equation}{section}
\begin{document}
\title{\textbf{Backward stochastic differential equations with regime-switching and sublinear expectations}}
\author[1]{Engel John C. Dela Vega}
\author[1,2]{Robert J. Elliott\footnote{Corresponding Author; Email: relliott@ucalgary.ca}}
\affil[1]{UniSA Business, University of South Australia, Adelaide, SA 5000, Australia}
\affil[2]{Haskayne School of Business, University of Calgary, Calgary, Alberta, Canada}
\date{}
	\maketitle
	\vspace{-1cm}
	\begin{abstract}
		This paper introduces a backward stochastic differential equation driven by both Brownian motion and a Markov chain (BSDEBM). Regime-switching is also incorporated through its driver. The existence and uniqueness of the solution of the BSDEBM are proved. A comparison theorem is also derived. Filtration consistent sublinear expectations are defined and characterized as solutions to the BSDEBM. The bid and ask prices are then represented using sublinear expectations.
		
		\vspace{12pt}
		
		\noindent\emph{Key Words:} backward stochastic differential equations, regime-switching, Markov chains, Brownian motion, comparison theorem, sublinear expectation, two-price theory
	\end{abstract}

	\section{Introduction}
	
	The theory of backward stochastic differential equations (BSDEs) has been broadly studied because of its connections to stochastic control \cite{hamadene:control, zhang:bsde,yongmin:controlbook, LiWangWu:bsdeinoptimalcontrol, ref5:opt} and mathematical finance \cite{ref4:opt, elkarouiquenez:bsdeinfinance, eyraudloisel:bsdeinoptionhedging}. Linear BSDEs were first introduced as adjoint equations of a stochastic control problem in \cite{bismut:linearBSDE}. The more general nonlinear BSDEs were then introduced in \cite{pardoux:bsde}, where the existence and uniqueness of solutions were also discussed and proved. The BSDEs studied in \cite{pardoux:bsde} are driven by Brownian motion alone. Jump diffusions were then also considered, such as BSDEs driven by L\'{e}vy processes \cite{bahlali:bsdewithlevy,nualartschoutens:bsdewithlevy}, and BSDEs driven by a Brownian motion and a Poisson random measure \cite{pardoux:bsdewithpoisson,tangli:bsdewithpoisson}. BSDEs driven by Markov chains have also been explored \cite{cohen:bsde2, cohen:bsde}.
	
	A classic result of the theory of BSDEs is the comparison theorem which was first introduced in \cite{peng:comparison}, where the BSDEs are driven by Brownian motion. As an extension, comparison theorems for BSDEs with jumps have also been discussed \cite{pardoux:bsdewithpoisson, elotmani:comparison, lin:comparison}. Another result is the connection of BSDEs with the theory of nonlinear expectations \cite{cohen:bsde, cohen:generalcomparison}. Nonlinear expectations are used mainly in problems dealing with uncertainty \cite{peng:sublinear}. They are related to the theory of $G$-expectations pioneered by Peng \cite{peng:g, ref1:sublinear, peng:gmulti}. It was shown in \cite{ref7:sublinear} that nonlinear expectations can be represented as a solution to a BSDE if the driver satisfies some conditions.
	
	One type of nonlinear expectation is the sublinear expectation, which satisfies subadditity and positive homogeneity. Sublinear expectations can be represented as a supremum of a family of expectations \cite{peng:sublinear,ref2:sublinear}. The theory of sublinear expectations has been known to provide several tools in finance and hence has been explored in more detail in both theory and in applications \cite{ref6:sublinear, ref3:sublinear, ref4:sublinear, ref5:sublinear}.
	
	In relation to BSDEs, regime-switching drivers have also been introduced in the literature. A two-time scale associated to weak and strong interactions for the Markov chain is discussed in \cite{taowuzhang:bsderegimeswitching}. A backward doubly stochastic differential equation with regime-switching is investigated in \cite{mawu:doublyswitchingdriver}. Singularly perturbed Markov chains incorporated in the driver are discussed in \cite{TangWu:bsdewithswitch}. Regime-switching BSDEs driven by Brownian motion and a Poisson random measure is explored in \cite{ShiWu:bsdewithswitch}. Similar results are discussed under the framework of reflected and doubly reflected BSDEs in \cite{crepeymatoussi:reflecteddoubly}. Regime-switching is also introduced in the driver and dynamics of a multi-valued BSDE in \cite{dengren:multibsderegime}.
	
	
	In this paper, we introduce a backward stochastic differential equation with a regime-switching driver driven by both a Brownian motion and a Markov chain (BSDEBM). Existence and uniqueness of the solution of the BSDEBM are established by proving the results for a sequence of three BSDEBMs of increasing complexity in the driver. A set of conditions is presented to allow a comparison theorem for the BSDEBM to hold. Filtration consistent sublinear evaluations and expectations are also defined and represented as solutions of the BSDEBM subject to some assumptions. The bid and ask prices are then consequently represented as solutions of the BSDEBM.
	
	Markov jump martingales can be decomposed into sums of marked Poisson processes as discussed in \cite{yinzhu:switchingdiffusions}. However, the canonical semimartingale representation of a Markov chain in \cite{elliott:hmm} is used in the results of this paper. The use of the semimartingale representation of the chain is in itself unique since it makes the calculations more tractable and insightful. To highlight the differences between the use of Markov chains as noise instead of Poisson processes, we mention the main differences in the results between this paper and \cite{ShiWu:bsdewithswitch}: (1) the technique used in proving the existence and uniqueness of solutions and (2) the condition imposed on the driver for the comparison theorem to hold. In this paper, three stages of a Picard-type iteration are used to prove existence and uniqueness of the solution to the BSDEBM, and the condition on the driver is specific to the Markov chain's rate matrix and associated seminorm. In \cite{ShiWu:bsdewithswitch}, a contraction mapping theorem is used to prove existence and uniqueness of the solution to their BSDE, and the condition on the driver involves two distinct Markov chains. A similar comparison can be made between Markov chains and L\'{e}vy processes.
	
	Furthermore, this paper extends the results of BSDEs driven by Brownian motion such as in \cite{pardoux:bsde} and BSDEs driven by a Markov chain as in \cite{cohen:bsde}. Markovian regime-switching is also introduced in the driver since a change in regime should have a corresponding change in the driver. We also impose that the transition rates of the Markov chain are bounded by a specific constant. We also use a special type of nonlinear expectations, which are the sublinear expectations, to represent them as solutions to the BSDEBM and to characterize bid and ask prices. 
	
	The paper is organized as follows: the dynamics of the BSDEBM are introduced in Section \ref{preliminaries}. The existence and uniqueness of the solutions of the BSDEBM are proved in Section \ref{existence and uniqueness of chapter 5}. A comparison theorem in Section \ref{comparison theorem chapter 5} is then derived. Section \ref{sublinear evaluations and expectations} defines the sublinear expectations and provides a representation of the bid and ask prices.

	\section{Preliminaries}\label{preliminaries}	
	
	Consider a complete probability space $(\Omega,\mc{F},P)$, where $P$ is a real-world probability measure. Let $\mathbf{W}:=\{W_t\}_{t\in [0,T]}$ be a standard Brownian motion on $(\Omega,\mc{F},P)$. Assume that states in an economy are modelled by a continuous-time Markov chain $\mathbf{X}:=\{X_t\}_{t\in[0,T]}$ on $(\Omega,\mc{F},P)$ with finite state space $\mc{S}:=\{e_1,e_2,\ldots,e_N \}\subset \mathbb{R}^N$, where $e_i\in\mathbb{R}^N$ and the $j$th component of $e_i$ is the Kronecker delta $\delta_{ij}$ for each $i,j=1,\ldots,N$. Initially assume that the path of $\mathbf{X}$ is known until time $T$. In addition, we suppose $\mathbf{X}$ and $\mathbf{W}$ are independent processes.
	
	Consider a filtration $\mathbb{F}:=\{\mc{F}_t\}_{t\in[0,T]}$ such that $\mc{F}_t\subseteq\mc{F}$ for all $t\in[0,T]$ and $\mc{F}_T=\mc{F}$. Write $\mathbb{F}^{\mathbf{X}}:=\{\mc{F}_t^{\mathbf{X}}\}_{t\in[0,T]}$ and $\mathbb{F}^{\mathbf{W}}:=\{\mc{F}_t^{\mathbf{W}}\}_{t\in[0,T]}$ for the $P$-augmented natural filtrations generated by $\mathbf{X}$ and $\mathbf{W}$, respectively. For each $t\in[0,T]$, we define $\mc{G}_t:=\mc{F}_t^{\mathbf{X}}\lor \mc{F}_t^{\mathbf{W}}$, the minimal augmented $\sigma$-field generated by the two $\sigma$-fields $\mc{F}_t^{\mathbf{X}}$ and $\mc{F}_t^{\mathbf{W}}$. Write $\mathbb{G}:=\{\mc{G}_t\}_{t\in[0,T]}$.
	
	Let $A:=[a_{ij}]_{N\times N}$ be a time-independent rate matrix of the chain $\mathbf{X}$ under $P$, where for $i\neq j$, $a_{ji}$ is the transition rate of the chain jumping from state $e_i$ to $e_j$. In \cite{elliott:hmm}, the following semimartingale dynamics for $\mathbf{X}$ under $P$ are obtained:
	\begin{equation}\label{chain section 5}
		X_t=X_0+\int_{0}^{t} AX_sds+M_t,
	\end{equation}
	where $\mathbf{M}:=\{M_t\}_{t\in [0,T]}$ is an $\mathbb{R}^N$-valued, square-integrable, $(\mathbb{F}^{\mathbf{X}},P)$-martingale. We further assume that there exists $c\in(0,1)$ such that $a_{ij}\in[c,c^{-1}]$ for all $i,j$ with $i\neq j$.
	
	Consider a backward stochastic differential equation driven by both Brownian motion and a Markov chain (BSDEBM) of the form
	\begin{align}\label{bsde}
		Y_t-\int_t^TF(u,Y_u,Z_1(u),Z_2(u),X_u)du+\int_t^TZ_1(u)dW_u+\int_t^TZ^{\top}_2(u)dM_u=Q,
	\end{align}
	where, for each $t\in[0,T]$, $F(\cdot,t,\cdot,\cdot,\cdot,\cdot):\Omega\times\mathbb{R}\times\mathbb{R}\times\mathbb{R}^N\times \mc{S}\to\mathbb{R}$, $Q:L^2(\mathbb{R};\mc{G}_T)\to\mathbb{R}$, and $Z_2(t)=(Z^1_2(t),\ldots,Z^N_2(t))^{\top}\in\mathbb{R}^N$ with $Z^i_2(t):\Omega\times[0,T]\to\mathbb{R}$. Furthermore, we suppose the driver $F$ in \eqref{bsde} satisfies the following assumptions:
	
	\begin{enumerate}
		\item[i.] For all $(y,z_1,z_2,x)\in\mathbb{R}\times\mathbb{R}\times\mathbb{R}^N\times\mc{S}$, $F(\cdot,y,z_1,z_2,x)\in M^2(0,T;\mathbb{R},\mc{G}_t)$.
		\item[ii.] There exists a constant $\mu>0$ such that for all $(\omega,t,x)\in\Omega\times[0,T]\times\mc{S}$, and triples $(y,z_1,z_2),(y',z'_1,z'_2)\in\mathbb{R}\times\mathbb{R}\times\mathbb{R}^N$,
		\begin{align*}
			|F(t,y,z_1,z_2,x)-F(t,y',z'_1,z'_2,x)|^2&\leq \mu(|y-y'|^2+|z_1-z'_1|^2\\
			&\qquad+\lVert z_2-z'_2\rVert^2_{X_{t-}}),
		\end{align*}
		where $\lVert C\rVert^2_{X_t}:=C^{\top}\psi_t C$ and $\psi_t$ is a nonnegative definite matrix defined by
		\begin{align}\label{defn of psi}
			\psi_t:=\mbox{diag}(AX_{t-})-A\mbox{diag}(X_{t-})-\mbox{diag}(X_{t-})A^{\top}.
		\end{align}
	\end{enumerate}
	
	Denote the spaces $L^p(\mathbb{R}^K,\mc{G}_t)$ be the set of $\mathbb{R}^K$-valued $\mc{G}_t$-measurable, $p$-integrable random variables, $M^2(0,T;\mathbb{R}^{K\times N},\mc{G}_t)$ be the set of $\mathbb{R}^{K\times N}$-valued $\mc{G}_t$-adapted square-integrable processes over $\Omega\times[0,T]$, and $P^2(0,T;\mathbb{R}^{K},\mc{G}_t)$ be the set of $\mathbb{R}^K$-valued $\mc{G}_t$-predictable processes $\{\varphi_t\}_{t\in[0,T]}$ such that $E(\int_0^t\lVert\varphi_s\rVert_{X_{s-}}^2ds)<\infty$. A solution $(Y,Z_1,Z_2)$ to the BSDEBM in \eqref{bsde} is a triple 
	\begin{align*}
		(Y,Z_1,Z_2)\in M^2(0,T;\mathbb{R}^{K\times N},\mc{G}_t)\times M^2(0,T;\mathbb{R}^{K\times N},\mc{G}_t)\times P^2(0,T;\mathbb{R}^{K},\mc{G}_t).
	\end{align*}

	\section{Existence and Uniqueness}\label{existence and uniqueness of chapter 5}
	
	In this section, the existence and the uniqueness of the solution to the BSDEBM in \eqref{bsde} are established. A double martingale representation, which shows that a $(\mathbb{G},P)$-local martingale can be represented by unique $\mathbb{G}$-predictable processes, is presented. This representation is used to prove an existence and uniqueness theorem. 
	
	\begin{mylm}\label{number of jumps j to i}
		For each $i\neq j$, let $N^{ij}_t$ be the number of jumps from state $e_i$ to $e_j$ until time $t\in[0,T]$. Then
		\begin{align*}
			N^{ij}_t=\int_0^ta_{ji}\langle X_s,e_i\rangle ds+M^{ij}_t,
		\end{align*}
		where $M^{ij}:=\{M^{ij}_t\}_{t\in[0,T]}$ is an $(\mathbb{F}^{\mathbf{X}},P)$-martingale given by
		\begin{align*}
			M^{ij}_t=\int_0^t\langle X_{s-},e_i\rangle\langle dM_s,e_j\rangle.
		\end{align*}
	\end{mylm}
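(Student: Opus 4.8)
The plan is to compute $N^{ij}$ pathwise as a sum over the (finitely many, by right-continuity and finiteness of $\mc{S}$) jump times of $\mathbf{X}$, recognise that sum as a Stieltjes/stochastic integral against $\mathbf{X}$, and then insert the semimartingale representation \eqref{chain section 5}. Since $\mathbf{X}$ is a pure-jump step process valued in $\mc{S}$, a transition from $e_i$ to $e_j$ occurs at time $s$ exactly when $\langle X_{s-},e_i\rangle\langle X_s,e_j\rangle=1$, so $N^{ij}_t=\sum_{0<s\le t}\langle X_{s-},e_i\rangle\langle X_s,e_j\rangle$. Because $i\neq j$, on the set $\{X_{s-}=e_i\}$ one has $\langle X_{s-},e_j\rangle=0$ and hence $\langle X_s,e_j\rangle=\langle X_s-X_{s-},e_j\rangle$ there; as $\mathbf{X}$ is càdlàg of finite variation and $\langle X_{\cdot-},e_i\rangle$ is bounded and predictable, the weighted jump sum equals the integral, giving
\begin{align*}
  N^{ij}_t=\int_0^t\langle X_{s-},e_i\rangle\,\langle dX_s,e_j\rangle .
\end{align*}

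Next I would substitute $dX_s=AX_s\,ds+dM_s$ from \eqref{chain section 5}, splitting $N^{ij}$ into the absolutely continuous term $\int_0^t\langle X_{s-},e_i\rangle\langle AX_s,e_j\rangle\,ds$ and the stochastic integral $M^{ij}_t:=\int_0^t\langle X_{s-},e_i\rangle\langle dM_s,e_j\rangle$. For the drift, since the jump times are countable we have $X_{s-}=X_s$ for Lebesgue-almost every $s$, so $\langle X_{s-},e_i\rangle\langle AX_s,e_j\rangle=\langle X_{s-},e_i\rangle\langle AX_{s-},e_j\rangle$ a.e.; and on $\{X_{s-}=e_i\}$ we get $\langle AX_{s-},e_j\rangle=\langle Ae_i,e_j\rangle=a_{ji}$. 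Hence the drift term equals $\int_0^t a_{ji}\langle X_{s-},e_i\rangle\,ds=\int_0^t a_{ji}\langle X_s,e_i\rangle\,ds$, which is precisely the claimed compensator. To see that $M^{ij}$ is an $(\mathbb{F}^{\mathbf{X}},P)$-martingale, note that $\langle X_{\cdot-},e_i\rangle$ is $\mathbb{F}^{\mathbf{X}}$-predictable and bounded by $1$ while the $e_j$-component of $\mathbf{M}$ is a square-integrable $(\mathbb{F}^{\mathbf{X}},P)$-martingale, so the stochastic integral $M^{ij}$ is itself a square-integrable $(\mathbb{F}^{\mathbf{X}},P)$-martingale. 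Combining the two displays yields $N^{ij}_t=\int_0^t a_{ji}\langle X_s,e_i\rangle\,ds+M^{ij}_t$.

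I expect the only genuinely delicate point to be the passage in the first paragraph: justifying that the pathwise sum of jumps coincides with $\int_0^t\langle X_{s-},e_i\rangle\langle dX_s,e_j\rangle$, and then, after inserting \eqref{chain section 5}, that the continuous finite-variation part $\int_0^{\cdot}AX_s\,ds$ contributes nothing to the jump count, so that the "jump bookkeeping" is carried entirely by $\mathbf{M}$. Once that is in place, the identification of the drift via $X_{s-}=X_s$ a.e.\ and the martingale property of $M^{ij}$ are routine given the structure of $\mc{S}$ and the square-integrability of $\mathbf{M}$ asserted after \eqref{chain section 5}.
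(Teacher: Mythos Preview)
Your proposal is correct and follows essentially the same route as the paper: write $N^{ij}_t$ as $\sum_{0<s\le t}\langle X_{s-},e_i\rangle\langle \Delta X_s,e_j\rangle$, identify this sum with $\int_0^t\langle X_{s-},e_i\rangle\langle dX_s,e_j\rangle$, insert the semimartingale decomposition \eqref{chain section 5}, and read off the drift and martingale parts. Your write-up supplies more explicit justification at each step (e.g., the passage from the jump sum to the Stieltjes integral, and the a.e.\ equality $X_{s-}=X_s$), but the argument is the same as the paper's.
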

	\begin{proof}
		By definition,
		\begin{align*}
			N^{ij}_t:=\sum_{0<s\leq t}\langle X_{s-},e_i \rangle\langle X_{s},e_j \rangle=\sum_{0<s\leq t}\langle X_{s-},e_i \rangle\langle \Delta X_{s},e_j \rangle.
		\end{align*}
		From \eqref{chain section 5},
		\begin{align*}
			N^{ij}_t&=\sum_{0<s\leq t}\langle X_{s-},e_i \rangle\langle \Delta X_{s},e_j \rangle\\
			&=\int_0^t\langle X_{s-},e_i \rangle\langle dX_{s},e_j \rangle\\
			&=\int_0^t\langle X_{s-},e_i \rangle\langle AX_{s},e_j \rangle ds+\int_0^t\langle X_{s-},e_i \rangle\langle dM_s,e_j \rangle\\
			&=\int_0^ta_{ji}\langle X_{s-},e_i \rangle ds+M^{ij}_t.
		\end{align*}
		Since $\mathbf{M}$ is an $(\mathbb{F}^{\mathbf{X}},P)$-martingale, then $M^{ij}$ is also an $(\mathbb{F}^{\mathbf{X}},P)$-martingale.
	\end{proof}
	
	\begin{myth}\label{double martingale representation}
		For any $(\mathbb{G},P)$-local martingale $R:=\{R_t\}_{t\in[0,T]}$, there exist unique $\mathbb{G}$-predictable processes $\phi_1:=\{\phi_1(t)\}_{t\in[0,T]}$ and $\phi_2:=\{\phi_2(t)\}_{t\in[0,T]}$, where $\phi_{2}(t)=[\phi_2^{ij}(t)]_{N\times N}$ such that
		\begin{align*}
			R_t=R_0+\int_0^t\phi_1(s)dW_s+\sum_{i\neq j}\sum_{j=1}^N\int_0^t\phi^{ij}_2(s)dM^{ij}_s.
		\end{align*}
	\end{myth}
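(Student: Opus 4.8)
The plan is to assemble the representation on the product filtration $\mathbb{G}$ from two one-factor representation theorems: the classical martingale representation theorem for the Brownian filtration $\mathbb{F}^{\mathbf{W}}$, and the martingale representation theorem for the finite-state chain on $\mathbb{F}^{\mathbf{X}}$ --- namely that the compensated edge-counting martingales $\{M^{ij}\}_{i\neq j}$ of Lemma \ref{number of jumps j to i} (equivalently, the martingale $\mathbf{M}$) have the predictable representation property, as in \cite{elliott:hmm}. A standard localization reduces everything to the case where $R$ is a square-integrable $(\mathbb{G},P)$-martingale, so I would take $R_t=E[R_T\mid\mc{G}_t]$ with $R_T\in L^2(\mathbb{R};\mc{G}_T)$ and recover the general local-martingale statement afterwards by stopping and passing to the limit.

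The key step exploits the independence of $\mathbf{W}$ and $\mathbf{X}$. Since $\mc{G}_T=\mc{F}^{\mathbf{X}}_T\lor\mc{F}^{\mathbf{W}}_T$ with the two $\sigma$-fields independent, a monotone-class argument shows that finite linear combinations of products $\xi\eta$, with $\xi\in L^2(\mathbb{R};\mc{F}^{\mathbf{W}}_T)$ and $\eta\in L^2(\mathbb{R};\mc{F}^{\mathbf{X}}_T)$ bounded, are dense in $L^2(\mathbb{R};\mc{G}_T)$. On the other hand, the set of $R_T$ admitting a representation of the asserted form is a linear subspace of $L^2(\mathbb{R};\mc{G}_T)$ which is closed: by the It\^o isometry the map sending a pair of predictable integrands $(\phi_1,\phi_2)$ to $\int_0^\cdot\phi_1(s)\,dW_s+\sum_{i\neq j}\int_0^\cdot\phi_2^{ij}(s)\,dM^{ij}_s$ is, modulo constants, an isometry onto a closed subspace of $L^2$, and the isometry is exact because $W$ and the $M^{ij}$ are mutually orthogonal $\mathbb{G}$-martingales --- $\langle W,M^{ij}\rangle\equiv 0$ by independence of $\mathbf{W}$ and $\mathbf{X}$, and $\langle M^{ij},M^{kl}\rangle\equiv 0$ for $(i,j)\neq(k,l)$ since the chain never traverses two distinct edges at the same instant. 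Hence it suffices to produce the representation when $R_T=\xi\eta$.

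For such a terminal value, set $U_t:=E[\xi\mid\mc{F}^{\mathbf{W}}_t]$ and $V_t:=E[\eta\mid\mc{F}^{\mathbf{X}}_t]$; independence gives $E[\xi\eta\mid\mc{G}_t]=U_tV_t$. The Brownian representation theorem yields $U_t=U_0+\int_0^t\phi_1(s)\,dW_s$ with $\phi_1$ unique and $\mathbb{F}^{\mathbf{W}}$-predictable, and the chain representation theorem together with Lemma \ref{number of jumps j to i} yields $V_t=V_0+\sum_{i\neq j}\int_0^t\gamma^{ij}(s)\,dM^{ij}_s$ with the $\gamma^{ij}$ unique and $\mathbb{F}^{\mathbf{X}}$-predictable. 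Applying the product rule for semimartingales and using that $U$ and $V$ have vanishing covariation --- because $\langle W,M^{ij}\rangle\equiv0$, so the $W$-integral $U$ and the $M^{ij}$-integrals making up $V$ do not interact --- gives
\begin{align*}
	U_tV_t=U_0V_0+\int_0^t V_{s-}\phi_1(s)\,dW_s+\sum_{i\neq j}\int_0^t U_{s-}\gamma^{ij}(s)\,dM^{ij}_s,
\end{align*}
which is exactly the claimed decomposition with integrand $V_{s-}\phi_1(s)$ against $W$ and $\phi^{ij}_2(s):=U_{s-}\gamma^{ij}(s)$ against $M^{ij}$ (both are $\mathbb{G}$-predictable and square-integrable, by Cauchy--Schwarz, since $U$ and $V$ are bounded martingales). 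Linearity together with the $L^2$-density from the previous step then extends the representation to all $R_T\in L^2(\mathbb{R};\mc{G}_T)$, and uniqueness follows by subtracting two representations and invoking the It\^o isometry and the orthogonality relations to conclude that the integrands agree $dP\times dt$-almost everywhere.

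The step I expect to be the main obstacle is making rigorous the ``independence glues the two representation theorems together'' principle --- that is, the density of product variables in $L^2(\mc{G}_T)$ paired with the $L^2$-closedness of the representable terminal values; once these are in place, the product-rule computation and the orthogonality of $W$ and the $M^{ij}$ are routine. A secondary technical point is the localization upgrading the result from square-integrable $(\mathbb{G},P)$-martingales to arbitrary $(\mathbb{G},P)$-local martingales, along with the bookkeeping that translates between the vector martingale $\mathbf{M}$ and the edge martingales $M^{ij}$.
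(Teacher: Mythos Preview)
Your sketch is correct and is, in essence, the argument behind the references the paper invokes: the paper's own ``proof'' is a one-line citation to Theorem~4.15 of \cite{elliott:doublemartingales} and Theorem~2.1 of \cite{boelkohlmann:doublemartingales}, both of which establish the predictable representation property on the product filtration by exactly the mechanism you describe---independence of $\mathbf{W}$ and $\mathbf{X}$, density of product terminal values $\xi\eta$ in $L^2(\mc{G}_T)$, the separate one-factor representations on $\mathbb{F}^{\mathbf{W}}$ and $\mathbb{F}^{\mathbf{X}}$, and the orthogonality $[W,M^{ij}]\equiv 0$, $[M^{ij},M^{kl}]\equiv 0$ for $(i,j)\neq(k,l)$ that makes the It\^o isometry split cleanly. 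So you have not taken a different route; you have simply unpacked what the paper leaves to the literature. The only caveat worth flagging is the localization step: an arbitrary $(\mathbb{G},P)$-local martingale need not be locally square-integrable without further argument, but here the driving martingales $W$ and $M^{ij}$ have bounded jumps, so the standard upgrade from the $L^2$ representation to all local martingales (as in Jacod--Shiryaev or the cited references) goes through.
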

	\begin{proof}
		The proof follows from Theorem 4.15 of \cite{elliott:doublemartingales} or Theorem 2.1 of \cite{boelkohlmann:doublemartingales}.
	\end{proof}
	
	Before proving the existence and uniqueness of the solution to the main BSDEBM, we first consider simplified versions of $\eqref{bsde}$. The first BSDEBM below has a driver that is dependent only on $t$ and $X_t$. 
	
	\begin{mypr}\label{simplified BSDE}
		Consider a simplified version of \eqref{bsde} of the form
		\begin{align}\label{bsde2}
			Y_t-\int_t^TF(u,X_u)du+\int_t^TZ_1(u)dW_u+\int_t^TZ^{\top}_2(u)dM_u=Q.
		\end{align}
		Then \eqref{bsde2} has a unique solution $(Y,Z_1,Z_2)$.
	\end{mypr}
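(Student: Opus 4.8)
The plan is to exploit the fact that the driver in \eqref{bsde2} depends only on $(u,X_u)$ and not on $(Y,Z_1,Z_2)$, so that \eqref{bsde2} is affine and its solution can be read off from a conditional expectation together with the representation of Theorem \ref{double martingale representation}. First I would set
$$\xi:=Q+\int_0^T F(u,X_u)\,du,$$
which is $\mc{G}_T$-measurable and, since $Q\in L^2(\mathbb{R};\mc{G}_T)$, $\mc{S}$ is finite, and each $F(\cdot,e_k)\in M^2(0,T;\mathbb{R},\mc{G}_t)$, square-integrable (Cauchy--Schwarz bounds $E|\int_0^T F(u,X_u)\,du|^2$ by $T\,E\int_0^T|F(u,X_u)|^2\,du$). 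Let $R_t:=E[\xi\mid\mc{G}_t]$, a square-integrable $(\mathbb{G},P)$-martingale, and apply Theorem \ref{double martingale representation} to get unique $\mathbb{G}$-predictable $\phi_1$ and $\phi_2=[\phi_2^{ij}]$ with $R_t=R_0+\int_0^t\phi_1(s)\,dW_s+\sum_{i\neq j}\sum_{j=1}^N\int_0^t\phi_2^{ij}(s)\,dM^{ij}_s$. Using Lemma \ref{number of jumps j to i} in the form $dM^{ij}_s=\langle X_{s-},e_i\rangle\langle dM_s,e_j\rangle$, together with $dM_s=\sum_{j}\langle dM_s,e_j\rangle e_j$, I would set $Z_1(t):=\phi_1(t)$, $Z_2^j(t):=\sum_{i\neq j}\phi_2^{ij}(t)\langle X_{t-},e_i\rangle$ (so $Z_2=(Z_2^1,\dots,Z_2^N)^{\top}$) and $Y_t:=R_t-\int_0^t F(u,X_u)\,du$; this converts $\sum_{i\neq j}\sum_{j}\int_0^t\phi_2^{ij}\,dM^{ij}$ into $\int_0^t Z_2^{\top}(s)\,dM_s$.

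Next I would check that $(Y,Z_1,Z_2)$ solves \eqref{bsde2} and lies in the required spaces. One has $Y_T=R_T-\int_0^T F\,du=Q$, and for every $t$,
$$Y_t-\int_t^T F(u,X_u)\,du+\int_t^T Z_1(u)\,dW_u+\int_t^T Z_2^{\top}(u)\,dM_u=R_T-\int_0^T F(u,X_u)\,du=Q,$$
which is exactly \eqref{bsde2}. Square-integrability of $R$, together with the orthogonality of $W$ and the $M^{ij}$ and of $M^{ij},M^{kl}$ for distinct ordered pairs (no common jumps), gives $E\langle R\rangle_T=E\int_0^T|\phi_1|^2\,ds+\sum_{i\neq j}E\int_0^T|\phi_2^{ij}(s)|^2 a_{ji}\langle X_{s-},e_i\rangle\,ds<\infty$, using $\langle M^{ij}\rangle_t=\int_0^t a_{ji}\langle X_{s-},e_i\rangle\,ds$ from Lemma \ref{number of jumps j to i}. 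Hence $Z_1\in M^2(0,T;\mathbb{R},\mc{G}_t)$, and since $\langle M\rangle_t=\int_0^t\psi_s\,ds$ one computes $E\int_0^T\lVert Z_2\rVert^2_{X_{s-}}\,ds=E\int_0^T Z_2^{\top}\psi_s Z_2\,ds<\infty$, i.e. $Z_2\in P^2(0,T;\mathbb{R}^N,\mc{G}_t)$; and $Y\in M^2$ because $R$ is a square-integrable martingale and $\int_0^{\cdot}F(u,X_u)\,du$ is square-integrable.

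For uniqueness I would take two solutions $(Y,Z_1,Z_2)$, $(Y',Z_1',Z_2')$ and set $\bar Y:=Y-Y'$, $\bar Z_1:=Z_1-Z_1'$, $\bar Z_2:=Z_2-Z_2'$. Subtracting the two copies of \eqref{bsde2}, the driver terms cancel because $F$ does not depend on $(Y,Z_1,Z_2)$, leaving $\bar Y_t=-\int_t^T\bar Z_1\,dW-\int_t^T\bar Z_2^{\top}\,dM$; thus $L_t:=\int_0^t\bar Z_1\,dW+\int_0^t\bar Z_2^{\top}\,dM$ is a square-integrable $(\mathbb{G},P)$-martingale with $\bar Y_t=L_t-L_T$. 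Conditioning on $\mc{G}_t$ and using that $\bar Y_t$ is $\mc{G}_t$-measurable forces $\bar Y_t=0$ for all $t$, hence $L\equiv L_0=0$. Since $W$ and $M$ are independent, $\langle W,M\rangle\equiv0$, so $0=E\langle L\rangle_T=E\int_0^T|\bar Z_1|^2\,ds+E\int_0^T\lVert\bar Z_2\rVert^2_{X_{s-}}\,ds$, whence $\bar Z_1=0$ in $M^2$ and $\bar Z_2=0$ in $P^2$.

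The only real difficulty is bookkeeping: Theorem \ref{double martingale representation} represents $R$ against the scalar martingales $M^{ij}$, while \eqref{bsde2} is written against the vector martingale $M$, so one must match the corresponding quadratic variations — via Lemma \ref{number of jumps j to i} and $\langle M\rangle_t=\int_0^t\psi_s\,ds$ — to see that the $L^2$-bounds on $\phi_2$ translate into finiteness of $E\int_0^T\lVert Z_2\rVert^2_{X_{s-}}\,ds$. One should also note that $\psi_t$ is only nonnegative definite, so $Z_2$ is determined merely up to $\lVert\cdot\rVert_{X_{t-}}$-null differences (equivalently, modulo $\ker\psi_t$); this is exactly why uniqueness is phrased in the space $P^2(0,T;\mathbb{R}^N,\mc{G}_t)$ rather than pointwise.
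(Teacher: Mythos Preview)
Your proposal is correct and follows essentially the same approach as the paper: both define the candidate $Y$ via the conditional expectation of $Q+\int_0^T F(u,X_u)\,du$, apply Theorem \ref{double martingale representation} to the resulting square-integrable martingale, and rewrite the $\sum_{i\neq j}\int\phi_2^{ij}\,dM^{ij}$ term as $\int Z_2^{\top}\,dM$ via Lemma \ref{number of jumps j to i} (your $Z_2^j(t)=\sum_{i\neq j}\phi_2^{ij}(t)\langle X_{t-},e_i\rangle$ is exactly the paper's $Z_2(t)=\Phi_t^{\top}X_{t-}$). The only cosmetic difference is in the uniqueness step: the paper invokes the uniqueness clause of Theorem \ref{double martingale representation} and of the Markov-chain martingale representation directly, whereas you compute $E\langle L\rangle_T$ using the orthogonality of $W$ and $M$ to force $\bar Z_1=0$ and $\lVert\bar Z_2\rVert_{X_{t-}}=0$; these are equivalent, and your version has the mild advantage of making the integrability checks and the $\ker\psi_t$ caveat explicit.
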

	\begin{proof}
		Define 
		\begin{align*}
			Y_t=E\left[Q+\int_t^TF(u,X_u)du\bigg|\mc{G}_t\right].
		\end{align*}
		Then,
		\begin{align*}
			Y_t+\int_0^tF(u,X_u)du=E\left[Q+\int_0^TF(u,X_u)du\bigg|\mc{G}_t\right],
		\end{align*}
		which is a square-integrable martingale by the assumptions made on $Q$ and $F$. It follows from Theorem \ref{double martingale representation} that there exist $\mathbb{G}$-predictable processes $Z_1:=\{Z_1(t)\}_{t\in[0,T]}$ and $\phi_2:=\{\phi_2(t)\}_{t\in[0,T]}$ such that
		\begin{align}\label{mrt}
			Y_t+\int_0^tF(u,X_u)du&=Y_0+\int_0^tZ_1(u)dW_u+\sum_{i\neq j}\sum_{j=1}^N\int_0^t\phi^{ij}_2(s)dM^{ij}_s.
		\end{align}
		Following the proof of Lemma 3.1 in \cite{cohen:bsde2}, define the predictable matrix process $\Phi$ by
		\begin{align*}
			[\Phi_t]_{ij}=\phi^{ij}_{2}(t)\cdot 1_{\{i\neq j\}}.
		\end{align*}
		Using Lemma \ref{number of jumps j to i}, the last term of the right-hand side of \eqref{mrt} can be written as
		\begin{align*}
			\sum_{i\neq j}\sum_{j=1}^N\int_0^t\phi^{ij}_2(s)dM^{ij}_s&=\int_0^t\sum_{i\neq j}\sum_{j=1}^N\phi^{ij}_2(s)\langle X_{s-},e_i \rangle\langle dM_s,e_j \rangle\\
			&=\int_0^t\sum_{i=1}^N\sum_{j=1}^N\phi^{ij}_2(s)\cdot 1_{\{i\neq j\}}\langle X_{s-},e_i \rangle\langle dM_s,e_j \rangle\\
			&=\int_0^tX^{\top}_{s-}\Phi_sdM_s=\int_0^tZ^{\top}_2(s)dM_s,
		\end{align*}
		where $Z_2(t)=X^{\top}_{t-}\Phi_t$. 
		
		By construction, $Y_T=Q$ and hence,
		\begin{align*}
			Y_t-\int_t^TF(u,X_u)du+\int_t^TZ_1(u)dW_u+\int_t^TZ^{\top}_2(u)dM_u=Q.
		\end{align*}
		
		Suppose the triples $(Y^1_t,Z^1_1(t),Z^1_2(t))$ and $(Y^2_t,Z^2_1(t),Z^2_2(t))$ both solve \eqref{bsde2}. Then for all $t\in[0,T]$,
		\begin{align*}
			Y^1_t-Y^2_t+\int_t^T[Z_1^1(u)-Z_1^2(u)]dW_u+\int_t^T[Z_2^1(u)-Z_2^2(u)]^{\top}dM_u=0.
		\end{align*}
		Taking a $\mc{G}_t$ conditional expectation implies that $Y^1_t=Y_t^2$ $P$-a.s for all $t$. Uniqueness from Theorem \ref{double martingale representation} implies that $Z_1^1(t)=Z_1^2(t)$ and uniqueness from the martingale representation for the chain in \cite{cohen:bsde2} implies that $Z_2^1(t)=Z_2^2(t)$ $d\langle M,M \rangle_t\times dP$-a.s, with $\langle M,M \rangle_t$ defined as the predictable quadratic variation of the chain.
	\end{proof}
	
	The second BSDEBM below has a driver similar to \eqref{bsde}, but does not depend on $Y_t$. The remaining results in this section have proofs that are divided into two parts: uniqueness and existence. The uniqueness part utilizes the Lipschitz continuity assumption and the existence part makes use of a Picard-type iteration, similar to the methods used in \cite{cohen:bsde2} and \cite{pardoux:bsde}.
	
	\begin{mypr}\label{BSDE with Z_1 and Z_2}
		Consider the BSDEBM of the form
		\begin{align}\label{bsde3}
			Y_t-\int_t^TF(u,Z_1(u),Z_2(u),X_u)du+\int_t^TZ_1(u)dW_u+\int_t^TZ^{\top}_2(u)dM_u=Q.
		\end{align}
		For each $(t,x)\in[0,T]\times\mc{S}$, suppose that there exists a $\mu>0$ such that
		\begin{align*}
			\left|F(t,Z^1_1(t),Z^1_2(t),x)-F(t,Z^2_1(t),Z^2_2(t),x)\right|^2&\leq\mu\left(|Z^1_1(t)-Z^2_1(t)|^2\right.\\
			&\qquad\left.+\lVert Z^1_2(t)-Z^2_2(t)\rVert_{X_{t-}}^2\right).
		\end{align*}
		Then \eqref{bsde3} has a unique solution $(Y,Z_1,Z_2)$.
	\end{mypr}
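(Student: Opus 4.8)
The plan is to obtain $(Y,Z_1,Z_2)$ by a Picard-type iteration resting on Proposition~\ref{simplified BSDE}. Take $(Z_1^{(0)},Z_2^{(0)})\equiv(0,0)$, and given $(Z_1^{(n)},Z_2^{(n)})\in M^2(0,T;\mathbb{R},\mc G_t)\times P^2(0,T;\mathbb{R}^N,\mc G_t)$, regard $u\mapsto F(u,Z_1^{(n)}(u),Z_2^{(n)}(u),X_u)$ as the driver. The Lipschitz hypothesis of the proposition combined with assumption~i.\ (used at $(0,0,X_u)$) shows by induction that this process lies in $M^2(0,T;\mathbb{R},\mc G_t)$, so Proposition~\ref{simplified BSDE} yields a unique triple $(Y^{(n+1)},Z_1^{(n+1)},Z_2^{(n+1)})\in M^2\times M^2\times P^2$ with $Z_2^{(n+1)}(t)=X_{t-}^\top\Phi_t^{(n+1)}$ solving
\begin{align*}
Y^{(n+1)}_t-\int_t^TF(u,Z_1^{(n)}(u),Z_2^{(n)}(u),X_u)\,du+\int_t^TZ_1^{(n+1)}(u)\,dW_u+\int_t^TZ_2^{(n+1)\top}(u)\,dM_u=Q,
\end{align*}
which closes the induction.

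Next I would control the increments. With $\delta Y^{(n)}:=Y^{(n+1)}-Y^{(n)}$, $\delta Z_i^{(n)}:=Z_i^{(n+1)}-Z_i^{(n)}$ and $\delta F^{(n)}(u):=F(u,Z_1^{(n)}(u),Z_2^{(n)}(u),X_u)-F(u,Z_1^{(n-1)}(u),Z_2^{(n-1)}(u),X_u)$, the differences solve a linear BSDEBM with zero terminal value. Applying It\^o's formula to $e^{\beta u}|\delta Y^{(n)}_u|^2$ (accounting for the jumps of $\mathbf M$), using that the predictable quadratic variation $d\langle M,M \rangle_u=\psi_u\,du$ gives $E\int_t^T\delta Z_2^{(n)\top}(u)\,d\langle M,M\rangle_u\,\delta Z_2^{(n)}(u)=E\int_t^T\lVert\delta Z_2^{(n)}(u)\rVert^2_{X_{u-}}\,du$, and taking expectations to remove the stochastic integrals, I get
\begin{align*}
e^{\beta t}E|\delta Y^{(n)}_t|^2+\beta E\!\int_t^T\!e^{\beta u}|\delta Y^{(n)}_u|^2du+E\!\int_t^T\!e^{\beta u}\big(|\delta Z_1^{(n)}(u)|^2+\lVert\delta Z_2^{(n)}(u)\rVert^2_{X_{u-}}\big)du=2E\!\int_t^T\!e^{\beta u}\delta Y^{(n)}_u\,\delta F^{(n)}(u)\,du.
\end{align*}
Bounding the right-hand side via Young's inequality and the Lipschitz estimate by $\beta E\int_t^Te^{\beta u}|\delta Y^{(n)}_u|^2du+\tfrac\mu\beta E\int_t^Te^{\beta u}(|\delta Z_1^{(n-1)}(u)|^2+\lVert\delta Z_2^{(n-1)}(u)\rVert^2_{X_{u-}})du$, the $\beta$-terms cancel, and with $t=0$, $\beta=2\mu$ this becomes the contraction $\lVert(\delta Z_1^{(n)},\delta Z_2^{(n)})\rVert_\beta^2\le\tfrac12\lVert(\delta Z_1^{(n-1)},\delta Z_2^{(n-1)})\rVert_\beta^2$ in the weighted norm $\lVert(z_1,z_2)\rVert_\beta^2:=E\int_0^Te^{\beta u}(|z_1(u)|^2+\lVert z_2(u)\rVert^2_{X_{u-}})\,du$, which is equivalent to the $M^2\times P^2$ norm on the bounded interval $[0,T]$. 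Hence $(Z_1^{(n)},Z_2^{(n)})$ is Cauchy in $M^2(0,T;\mathbb{R},\mc G_t)\times P^2(0,T;\mathbb{R}^N,\mc G_t)$; feeding this back into the same inequality (and using Doob's maximal inequality for the $\sup_t$ bound) shows $(Y^{(n)})$ is Cauchy too. Letting $(Y,Z_1,Z_2)$ be the limit, the Lipschitz hypothesis forces $F(\cdot,Z_1^{(n)},Z_2^{(n)},X_\cdot)\to F(\cdot,Z_1,Z_2,X_\cdot)$ in $M^2$, so each term of the iterate equation converges and $(Y,Z_1,Z_2)$ solves \eqref{bsde3}.

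For uniqueness, given two solutions I would form the differences $\delta Y,\delta Z_1,\delta Z_2$ with $\delta F(u):=F(u,Z_1^1(u),Z_2^1(u),X_u)-F(u,Z_1^2(u),Z_2^2(u),X_u)$ and run the identical It\^o computation with $\beta=0$, obtaining
\begin{align*}
E|\delta Y_t|^2+E\int_t^T\big(|\delta Z_1(u)|^2+\lVert\delta Z_2(u)\rVert^2_{X_{u-}}\big)du=2E\int_t^T\delta Y_u\,\delta F(u)\,du\le 2\mu E\int_t^T|\delta Y_u|^2du+\tfrac12E\int_t^T\big(|\delta Z_1(u)|^2+\lVert\delta Z_2(u)\rVert^2_{X_{u-}}\big)du,
\end{align*}
so absorbing the last term and applying Gronwall's inequality to $t\mapsto E|\delta Y_t|^2$ forces $\delta Y\equiv0$ and then $\delta Z_1=\delta Z_2=0$, $d\langle M,M\rangle_t\times dP$-a.e. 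I expect the only real work to be bookkeeping: verifying at every step that the driver stays in $M^2$ so the iteration is closed in $M^2\times M^2\times P^2$, justifying that the $dW$- and $dM$-stochastic integrals are genuine martingales with zero mean, and pinning down the quadratic-variation term of the chain integral as the $\lVert\cdot\rVert_{X_{u-}}$ seminorm; the conceptual core is already supplied by Theorem~\ref{double martingale representation} and Proposition~\ref{simplified BSDE}.
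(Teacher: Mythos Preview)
Your proposal is correct and follows essentially the same approach as the paper: a Picard iteration built on Proposition~\ref{simplified BSDE}, It\^o's formula applied to the squared difference, Young's inequality combined with the Lipschitz bound, and an exponential weight to force the contraction factor $\tfrac12$ (your $\beta=2\mu$ is exactly the paper's $1/k^2$ with $k^2=(2\mu)^{-1}$). The only cosmetic differences are that you build the weight $e^{\beta u}$ into the It\^o computation directly while the paper applies it afterward as an integrating factor, and for uniqueness you use Gr\"onwall with $\beta=0$ whereas the paper uses the weighted inequality itself.
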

	\begin{proof}
		(Uniqueness.) Suppose $(Y^1,Z^1_1,Z^1_2)$ and $(Y^2,Z^2_1,Z^2_2)$ are both solutions to \eqref{bsde3}. Then,
		\begin{align*}
			Y^1_t-Y^2_t&=\int_t^T\left[F(u,Z^1_1(u),Z^1_2(u),X_u)-F(u,Z^2_1(u),Z^2_2(u),X_u)\right]du\\
			&\quad-\int_t^T\left[Z^1_1(u)-Z^2_1(u)\right]dW_u-\int_t^T\left[Z^1_2(u)-Z^2_2(u)\right]^{\top}dM_u.
		\end{align*}
		Using Ito's formula,
		\begin{align*}
			\left|Y^1_t-Y^2_t\right|^2&=-2\int_t^T\left[Y^1_u-Y^2_u\right]\left[F(u,Z^1_1(u),Z^1_2(u),X_u)\right.\\
			&\qquad\left.-F(u,Z^2_1(u),Z^2_2(u),X_u)\right]du\\
			&\quad-2\int_t^T\left[Y^1_u-Y^2_u\right]\left[Z^1_1(u)-Z^2_1(u)\right]dW_u\\
			&\quad-\int_t^T|Z^1_1(u)-Z^2_1(u)|^2du\\
			&\quad-2\int_t^T\left[Y^1_u-Y^2_u\right]\left[Z^1_2(u)-Z^2_2(u)\right]^{\top}dM_u\\
			&\quad-\sum_{t<u\leq T}|\Delta Y^1_u-\Delta Y^2_u|^2.
		\end{align*}
		Taking expectations,
		\begin{align*}
			E\left|Y^1_t-Y^2_t\right|^2&=-2\int_t^TE\left\{\left[Y^1_u-Y^2_u\right]\left[F(u,Z^1_1(u),Z^1_2(u),X_u)\right.\right.\\
			&\quad\qquad\left.\left.-F(u,Z^2_1(u),Z^2_2(u),X_u)\right]\right\}du\\
			&\quad-\int_t^TE|Z^1_1(u)-Z^2_1(u)|^2du-E\left(\sum_{t<u\leq T}|\Delta Y^1_u-\Delta Y^2_u|^2\right)\\
			&=-2\int_t^TE\left\{\left[Y^1_u-Y^2_u\right]\left[F(u,Z^1_1(u),Z^1_2(u),X_u)\right.\right.\\
			&\quad\qquad\left.\left.-F(u,Z^2_1(u),Z^2_2(u),X_u)\right]\right\}du\\
			&\quad-\int_t^TE|Z^1_1(u)-Z^2_1(u)|^2du\\
			&\quad-E\left(\sum_{t<u\leq T}\left|(Z^1_2(u)-Z^2_2(u))^{\top}\Delta M_u\right|^2\right)\\
			&=-2\int_t^TE\left\{\left[Y^1_u-Y^2_u\right]\left[F(u,Z^1_1(u),Z^1_2(u),X_u)\right.\right.\\
			&\quad\qquad\left.\left.-F(u,Z^2_1(u),Z^2_2(u),X_u)\right]\right\}du\\
			&\quad-\int_t^TE|Z^1_1(u)-Z^2_1(u)|^2du\\
			&\quad-\int_t^TE\left\lVert Z^1_2(u)-Z^2_2(u)\right\rVert_{X_{u-}}^2du.
		\end{align*}
		For any $k\in\mathbb{R}$, $\pm 2ab\leq k^{-2}a^2+k^2b^2$. By the Lipschitz assumption, there exists a $\mu>0$ such that
		\begin{align*} 
			E\left|Y^1_t-Y^2_t\right|^2&\leq\int_t^T\bigg[k^{-2}E\left|Y^1_u-Y^2_u\right|^2\\
			&\quad\qquad+k^2E\left|F(u,Z^1_1(u),Z^1_2(u),X_u)-F(u,Z^2_1(u),Z^2_2(u),X_u)\right|^2\\
			&\quad\qquad-E|Z^1_1(u)-Z^2_1(u)|^2-E\left\lVert Z^1_2(u)-Z^2_2(u)\right\rVert_{X_{u-}}^2\bigg]du\\
			&\leq\int_t^T\bigg[k^{-2}E\left|Y^1_u-Y^2_u\right|^2+(\mu k^2-1)\bigg(E|Z^1_1(u)-Z^2_1(u)|^2\\
			&\quad\qquad+E\left\lVert Z^1_2(u)-Z^2_2(u)\right\rVert_{X_{u-}}^2\bigg)\bigg]du.
		\end{align*}
		By the method of integrating factors,
		\begin{align*} 
			e^{t/{k^2}}E\left|Y^1_t-Y^2_t\right|^2&\leq(\mu k^2-1)\int_t^Te^{u/{k^2}}\bigg[E|Z^1_1(u)-Z^2_1(u)|^2\\
			&\qquad\qquad\qquad\qquad+E\left\lVert Z^1_2(u)-Z^2_2(u)\right\rVert_{X_{u-}}^2\bigg]du.\\
		\end{align*}
		Set $k^2\leq 1/\mu$. It follows that $E\left|Y^1_t-Y^2_t\right|^2=0$, which implies that $Y^1_t=Y^2_t$ $P$-a.s for all $t\in[0,T]$. Moreover,
		\begin{align*}
			\int_t^T\bigg[E|Z^1_1(u)-Z^2_1(u)|^2+E\left\lVert Z^1_2(u)-Z^2_2(u)\right\rVert_{X_{u-}}^2\bigg]du=0,
		\end{align*}
		which implies that $Z^1_1(t)=Z^2_1(t)$ $P$-a.s and $Z^1_2(t)=Z^2_2(t)$ $d\langle M,M \rangle_t\times dP$-a.s for all $t\in[0,T]$. Hence, $(Y^1,Z^1_1,Z^1_2)$ and $(Y^2,Z^2_1,Z^2_2)$ are equivalent and \eqref{bsde3} has a unique solution.
		
		(Existence.) Define an approximating sequence using a Picard-type iteration. Suppose that the triple $(Y^{n+1}_t,Z^{n+1}_1(t),Z^{n+1}_2(t))$ is the solution to the following BSDEBM:
		\begin{align}\label{bsde 3.1}
			Q&=Y^{n+1}_t-\int_t^TF(u,Z^{n}_1(u),Z^{n}_2(u),X_u)du+\int_t^TZ^{n+1}_1(u)dW_u\nonumber\\
			&\quad+\int_t^T(Z^{n+1}_2(u))^{\top}dM_u.
		\end{align}
		
		From Proposition \ref{simplified BSDE}, \eqref{bsde 3.1} has to have a unique solution. For simplicity of notation, we will usually suppress the time variable $u$ below. Using similar methods to those above,
		\begin{align*}
			E\left|Y^{n+1}_t-Y^{n}_t\right|^2&=-2\int_t^TE\bigg\{\left[Y^{n+1}_u-Y^{n}_u\right]\left[F(u,Z^{n}_1,Z^{n}_2,X_u)\right.\\
			&\qquad\quad\left.-F(u,Z^{n-1}_1,Z^{n-1}_2,X_u)\right]\bigg\}du\\
			&\quad-\int_t^TE|Z^{n+1}_1(u)-Z^{n}_1(u)|^2du\\
			&\quad-\int_t^TE\left\lVert Z^{n+1}_2(u)-Z^{n}_2(u)\right\rVert_{X_{u-}}^2du.
		\end{align*}
		Then for $k\in\mathbb{R}$,
		\begin{align*} 
			E\left|Y^{n+1}_t-Y^{n}_t\right|^2&\leq\int_t^T\bigg[k^{-2}E\left|Y^{n+1}_u-Y^{n}_u\right|^2\\
			&\quad+k^2E\left|F(u,Z^{n}_1,Z^{n}_2,X_u)-F(u,Z^{n-1}_1,Z^{n-1}_2,X_u)\right|^2\\
			&\quad-E|Z^{n+1}_1(u)-Z^{n}_1(u)|^2-E\left\lVert Z^{n+1}_2(u)-Z^{n}_2(u)\right\rVert_{X_{u-}}^2\bigg]du\\
			&\leq\int_t^T\bigg[k^{-2}E\left|Y^{n+1}_u-Y^{n}_u\right|^2\\
			&\quad+\mu k^2\left(E|Z^{n}_1(u)-Z^{n-1}_1(u)|^2\right.\\
			&\quad\left.+E\left\lVert Z^{n}_2(u)-Z^{n-1}_2(u)\right\rVert_{X_{u-}}^2\right)\\
			&\quad-E|Z^{n+1}_1(u)-Z^{n}_1(u)|^2-E\left\lVert Z^{n+1}_2(u)-Z^{n}_2(u)\right\rVert_{X_{u-}}^2\bigg]du.
		\end{align*}
		Using the method of integrating factors,
		\begin{align}\label{cauchy1} 
			e^{t/{k^2}}E\left|Y^{n+1}_t-Y^{n}_t\right|^2&\leq\int_t^Te^{u/{k^2}}\bigg[\mu k^2\left(E|Z^{n}_1(u)-Z^{n-1}_1(u)|^2\right.\nonumber\\
			&\qquad\left.+E\left\lVert Z^{n}_2(u)-Z^{n-1}_2(u)\right\rVert_{X_{u-}}^2\right)\nonumber\\
			&\qquad-E|Z^{n+1}_1(u)-Z^{n}_1(u)|^2\nonumber\\
			&\qquad-E\left\lVert Z^{n+1}_2(u)-Z^{n}_2(u)\right\rVert_{X_{u-}}^2\bigg]du.
		\end{align}
		
		Set $k^2\leq (2\mu)^{-1}$. Since $e^{t/{k^2}}E\left|Y^{n+1}_t-Y^{n}_t\right|^2\geq 0$,
		\begin{align*}
			&\int_t^Te^{u/{k^2}}\bigg[E|Z^{n+1}_1(u)-Z^{n}_1(u)|^2+E\left\lVert Z^{n+1}_2(u)-Z^{n}_2(u)\right\rVert_{X_{u-}}^2\bigg]du\\
			&\leq\frac{1}{2}\int_t^Te^{2\mu u}\bigg[E|Z^{n}_1(u)-Z^{n-1}_1(u)|^2+E\left\lVert Z^{n}_2(u)-Z^{n-1}_2(u)\right\rVert_{X_{u-}}^2\bigg]du\\
			&\leq\frac{1}{2^n}\int_t^Te^{2\mu u}\bigg[E|Z^{1}_1(u)-Z^{0}_1(u)|^2+E\left\lVert Z^{1}_2(u)-Z^{0}_2(u)\right\rVert_{X_{u-}}^2\bigg]du.
		\end{align*}
		This implies that $\{Z^n_1(t)\}$ and $\{Z^n_2(t)\}$ are Cauchy sequences under appropriate norms. Using \eqref{cauchy1}, $Y^n_t$ is also a Cauchy sequence. By completeness, the limits exist and thus satisfy \eqref{bsde3}. 
	\end{proof}
	
	The following theorem proves the existence and uniqueness of the solution to the BSDEBM in \eqref{bsde}. The proof is quite similar to the proposition above.
	
	\begin{myth}[Existence and Uniqueness]\label{existence and uniqueness}
		There exists a unique triple of $\mathbb{G}$-adapted processes $(Y,Z_1,Z_2)$ which solves \eqref{bsde}. 
	\end{myth}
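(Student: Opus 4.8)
The plan is to follow the template of Proposition \ref{BSDE with Z_1 and Z_2} almost verbatim, handling uniqueness and existence in turn; the only genuinely new feature is that the driver $F$ now also depends on $Y_u$, which produces an extra term that must be absorbed into the Gronwall / integrating‑factor steps. Throughout I would write $\delta Y := Y^1-Y^2$ (resp. $\delta Y^n := Y^{n+1}-Y^n$) and similarly $\delta Z_1,\delta Z_2$, and $\delta F_u$ for the difference of the two drivers evaluated along the two processes being compared.

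\emph{Uniqueness.} Suppose $(Y^1,Z_1^1,Z_2^1)$ and $(Y^2,Z_1^2,Z_2^2)$ both solve \eqref{bsde}. Applying It\^o's formula to $|\delta Y_t|^2$ and taking expectations exactly as in Proposition \ref{BSDE with Z_1 and Z_2} — the jump term again collapsing to $\int_t^T E\lVert\delta Z_2(u)\rVert^2_{X_{u-}}\,du$ — gives
\[
E|\delta Y_t|^2+\int_t^T E|\delta Z_1(u)|^2\,du+\int_t^T E\lVert\delta Z_2(u)\rVert^2_{X_{u-}}\,du=-2\int_t^T E[\delta Y_u\,\delta F_u]\,du .
\]
Using $\pm 2ab\leq k^{-2}a^2+k^2b^2$ and assumption ii, the right-hand side is at most $\int_t^T\big[(k^{-2}+\mu k^2)E|\delta Y_u|^2+\mu k^2(E|\delta Z_1(u)|^2+E\lVert\delta Z_2(u)\rVert^2_{X_{u-}})\big]du$; choosing $k^2\leq 1/\mu$ and an integrating factor $e^{t/k^2}$ forces $E|\delta Y_t|^2=0$ for all $t$, and reinserting $\delta Y\equiv 0$ into the displayed identity makes the $\delta Z$-terms vanish as well. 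Hence the solution is unique.

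\emph{Existence.} I would set up a Picard sequence: take $(Y^0,Z_1^0,Z_2^0)\equiv(0,0,0)$ and let $(Y^{n+1},Z_1^{n+1},Z_2^{n+1})$ be the unique solution — which exists by Proposition \ref{simplified BSDE} applied with the (now state-independent) driver $u\mapsto F(u,Y^n_u,Z_1^n(u),Z_2^n(u),X_u)$, which lies in $M^2(0,T;\mathbb{R},\mc{G}_t)$ by assumptions i and ii — of
\[
Q=Y^{n+1}_t-\int_t^T F(u,Y^n_u,Z_1^n(u),Z_2^n(u),X_u)\,du+\int_t^T Z_1^{n+1}(u)\,dW_u+\int_t^T (Z_2^{n+1}(u))^{\top}\,dM_u .
\]
Running the same It\^o computation on $|\delta Y^n_t|^2$, with the $k^{-2}E|\delta Y^n_u|^2$ terms cancelling as in \eqref{cauchy1}, gives for $k^2\le 1/\mu$
\begin{align*}
e^{t/k^2}E|\delta Y^n_t|^2&+\int_t^T e^{u/k^2}\big(E|\delta Z_1^n(u)|^2+E\lVert \delta Z_2^n(u)\rVert^2_{X_{u-}}\big)du\\
&\le \mu k^2\!\int_t^T\! e^{u/k^2}\big(E|\delta Y^{n-1}_u|^2+E|\delta Z_1^{n-1}(u)|^2+E\lVert \delta Z_2^{n-1}(u)\rVert^2_{X_{u-}}\big)du .
\end{align*}

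The hard part — the only real deviation from Proposition \ref{BSDE with Z_1 and Z_2} — is the term $\mu k^2\int_t^T e^{u/k^2}E|\delta Y^{n-1}_u|^2\,du$ on the right, which couples the $Y$-increments and the $Z$-increments across consecutive iterates, so one can no longer read off a bare geometric bound for the $Z$'s alone. I would resolve this by controlling the whole triple through the single weighted quantity $\Theta_n(t):=\sup_{s\in[t,T]}e^{s/k^2}E|\delta Y^n_s|^2+\int_t^T e^{u/k^2}\big(E|\delta Z_1^n(u)|^2+E\lVert \delta Z_2^n(u)\rVert^2_{X_{u-}}\big)du$ and using $\int_t^T e^{u/k^2}E|\delta Y^{n-1}_u|^2\,du\le T\sup_{s\in[t,T]}e^{s/k^2}E|\delta Y^{n-1}_s|^2$; the displayed inequality then yields $\Theta_n(t)\le \kappa\,\Theta_{n-1}(t)$ with $\kappa=\kappa(\mu,T,k)$, which is $<1$ once $k$ is chosen small enough. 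Consequently $\{Y^n\}$, $\{Z_1^n\}$, $\{Z_2^n\}$ are Cauchy in $M^2(0,T;\mathbb{R},\mc{G}_t)$, $M^2(0,T;\mathbb{R},\mc{G}_t)$ and $P^2(0,T;\mathbb{R}^N,\mc{G}_t)$ respectively, so by completeness the limit $(Y,Z_1,Z_2)$ lies in the solution space. Finally I would pass to the limit in the Picard equation: the two stochastic integrals converge in $L^2$ by the It\^o isometry, and $\int_t^T F(u,Y^n_u,Z_1^n(u),Z_2^n(u),X_u)\,du\to\int_t^T F(u,Y_u,Z_1(u),Z_2(u),X_u)\,du$ in $L^2$ by assumption ii together with Cauchy–Schwarz, so $(Y,Z_1,Z_2)$ solves \eqref{bsde}. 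Everything except the bookkeeping around the $Y$-coupling is identical to the earlier results.
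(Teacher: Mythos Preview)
Your argument is correct, but the existence step follows a genuinely different route from the paper's. The paper uses a \emph{partial} Picard iteration that freezes only $Y$ at the previous step,
\[
Y^{n+1}_t = Q + \int_t^T F\bigl(u, Y^n_u, Z_1^{n+1}(u), Z_2^{n+1}(u), X_u\bigr)\,du - \int_t^T Z_1^{n+1}(u)\,dW_u - \int_t^T\bigl(Z_2^{n+1}(u)\bigr)^{\top}dM_u,
\]
so that each step is a BSDEBM of the type handled by Proposition~\ref{BSDE with Z_1 and Z_2}, not merely Proposition~\ref{simplified BSDE}. Convergence is then obtained not by contraction but via the factorial bound
\[
\int_t^T E\bigl|Y^{n+1}_u-Y^n_u\bigr|^2\,du \;\le\; \frac{\bigl[T\mu_0 e^{T\mu_0}\bigr]^n}{n!}\int_t^T E\bigl|Y^{1}_u-Y^{0}_u\bigr|^2\,du,
\]
which tends to zero for \emph{any} $T$ and $\mu$. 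You instead freeze the full triple $(Y^n,Z_1^n,Z_2^n)$, so each step needs only Proposition~\ref{simplified BSDE}, and you close with a genuine contraction in the weighted quantity $\Theta_n$, at the price of choosing $k$ small depending on $T$. Your route is the classical contraction-mapping proof and is in a sense more elementary---it makes Proposition~\ref{BSDE with Z_1 and Z_2} unnecessary for Theorem~\ref{existence and uniqueness}---while the paper's staged iteration is precisely what motivates the three-proposition build-up and avoids any smallness condition tying $k$ to $T$. One minor correction: in your uniqueness step the integrating-factor exponent should be $k^{-2}+\mu k^2$ rather than $k^{-2}$ (or simply invoke Gr\"onwall directly, as the paper does); this does not affect the conclusion.
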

	\begin{proof}
		(Uniqueness.) Suppose $(Y^1,Z^1_1,Z^1_2)$ and $(Y^2,Z^2_1,Z^2_2)$ both are solutions to \eqref{bsde}. Then,
		\begin{align*}
			E\left|Y^1_t-Y^2_t\right|^2&=\int_t^T\bigg[-2E\left\{\left[Y^1_u-Y^2_u\right]\left[F(u,Y^1_u,Z^1_1(u),Z^1_2(u),X_u)\right.\right.\\
			&\quad\left.\left.-F(u,Y^1_u,Z^2_1(u),Z^2_2(u),X_u)\right]\right\}\\
			&\quad-E|Z^1_1(u)-Z^2_1(u)|^2-E\left\lVert Z^1_2(u)-Z^2_2(u)\right\rVert_{X_{u-}}^2\bigg]du.
		\end{align*}
		For any $k\in\mathbb{R}$,
		\begin{align*} 
			E\left|Y^1_t-Y^2_t\right|^2&\leq\int_t^T\bigg[-E|Z^1_1(u)-Z^2_1(u)|^2-E\left\lVert Z^1_2(u)-Z^2_2(u)\right\rVert_{X_{u-}}^2\\
			&\quad\qquad+k^{-2}E\left|Y^1_u-Y^2_u\right|^2+k^2E\left|F(u,Y^1_u,Z^1_1(u),Z^1_2(u),X_u)\right.\\
			&\quad\qquad\left.-F(u,Y^2_u,Z^2_1(u),Z^2_2(u),X_u)\right|^2\bigg]du\\
			&\leq\int_t^T\bigg[(k^{-2}+\mu k^2)E\left|Y^1_u-Y^2_u\right|^2\\
			&\quad\qquad+(\mu k^2-1)\bigg(E|Z^1_1(u)-Z^2_1(u)|^2\\
			&\quad\qquad+E\left\lVert Z^1_2(u)-Z^2_2(u)\right\rVert_{X_{u-}}^2\bigg)\bigg]du.
		\end{align*}
		Setting $k^2=(2\mu)^{-1}$,
		\begin{align}\label{k is 2mu^-1} 
			0 \leq E\left|Y^1_t-Y^2_t\right|^2&\leq\frac{1}{2}\int_t^T\bigg[(4\mu+1)E\left|Y^1_u-Y^2_u\right|^2-E|Z^1_1(u)-Z^2_1(u)|^2\nonumber\\
			&\qquad\qquad-E\left\lVert Z^1_2(u)-Z^2_2(u)\right\rVert_{X_{u-}}^2\bigg]du\\
			&\leq \frac{4\mu+1}{2}\int_t^TE\left|Y^1_u-Y^2_u\right|^2du.\nonumber
		\end{align}
		By Gr\"onwall's inequality, $E\left|Y^1_t-Y^2_t\right|^2=0$ which implies that $Y^1_t=Y^2_t$ $P$-a.s for all $t\in[0,T]$. In addition, from $\eqref{k is 2mu^-1}$,
		\begin{align*}
			&\int_t^T\bigg[E|Z^1_1(u)-Z^2_1(u)|^2+E\left\lVert Z^1_2(u)-Z^2_2(u)\right\rVert_{X_{u-}}^2\bigg]\\
			&\leq (4\mu+1)\int_t^TE\left|Y^1_u-Y^2_u\right|^2du=0.
		\end{align*}
		Thus, $Z^1_1(t)=Z^2_1(t)$ $P$-a.s and $Z^1_2(t)=Z^2_2(t)$ $d\langle M,M \rangle_t\times dP$-a.s for all $t\in[0,T]$. Hence, $(Y^1,Z^1_1,Z^1_2)$ and $(Y^2,Z^2_1,Z^2_2)$ are equivalent and \eqref{bsde} has a unique solution.
		
		(Existence.) Define an approximating sequence using a Picard-type iteration. Suppose that the triple $(Y^{n+1}_t,Z^{n+1}_1(t),Z^{n+1}_2(t))$ is the solution to the following BSDEBM:
		\begin{align}\label{bsde 1.1}
			Y^{n+1}_t&=Q+\int_t^TF(u,Y^{n}_{u},Z^{n+1}_1(u),Z^{n+1}_2(u),X_u)du-\int_t^TZ^{n+1}_1(u)dW_u\nonumber\\
			&\quad-\int_t^T(Z^{n+1}_2(u))^{\top}dM_u.
		\end{align}
		
		From Proposition \ref{BSDE with Z_1 and Z_2}, \eqref{bsde 1.1} has a unique solution. Furthermore,
		\begin{align*}
			E\left|Y^{n+1}_t-Y^{n}_t\right|^2&=-2\int_t^TE\bigg\{\left[Y^{n+1}_u-Y^{n}_u\right]\left[F(u,Y^{n}_u,Z^{n+1}_1(u),Z^{n+1}_2(u),X_u)\right.\\
			&\qquad\quad\left.-F(u,Y^{n-1}_u,Z^{n}_1(u),Z^{n}_2(u),X_u)\right]\bigg\}du\\
			&\qquad\quad-\int_t^TE|Z^{n+1}_1(u)-Z^{n}_1(u)|^2du\\
			&\qquad\quad-\int_t^TE\left\lVert Z^{n+1}_2(u)-Z^{n}_2(u)\right\rVert_{X_{u-}}^2du.
		\end{align*}
		Then for $k\in\mathbb{R}$,
		\begin{align*} 
			E\left|Y^{n+1}_t-Y^{n}_t\right|^2&\leq\int_t^T\bigg[-E|Z^{n+1}_1(u)-Z^{n}_1(u)|^2-E\left\lVert Z^{n+1}_2(u)-Z^{n}_2(u)\right\rVert_{X_{u-}}^2\\
			&\quad\qquad+k^{-2}E\left|Y^{n+1}_u-Y^{n}_u\right|^2\\
			&\quad\qquad+k^2E\left|F(u,Y^{n}_u,Z^{n+1}_1(u),Z^{n+1}_2(u),X_u)\right.\\
			&\quad\qquad\left.-F(u,Y^{n-1}_u,Z^{n}_1(u),Z^{n}_2(u),X_u)\right|^2\bigg]du\\
			&\leq\int_t^T\bigg[k^{-2}E\left|Y^{n+1}_u-Y^{n}_u\right|^2+\mu k^2E\left|Y^{n}_u-Y^{n-1}_u\right|^2\\
			&\qquad\quad+(\mu k^2-1)\left(E|Z^{n+1}_1(u)-Z^{n}_1(u)|^2\right.\\
			&\qquad\quad\left.+E\left\lVert Z^{n+1}_2(u)-Z^{n}_2(u)\right\rVert_{X_{u-}}^2\right)\bigg]du.
		\end{align*}
		Setting $k^2= (2\mu)^{-1}$,
		\begin{align}
			E\left|Y^{n+1}_t-Y^{n}_t\right|^2&\leq\frac{1}{2}\int_t^T\bigg[4\mu E\left|Y^{n+1}_u-Y^{n}_u\right|^2+E\left|Y^{n}_u-Y^{n-1}_u\right|^2\nonumber\\
			&\qquad\qquad-E|Z^{n+1}_1(u)-Z^{n}_1(u)|^2\nonumber\\
			&\qquad\qquad-E\left\lVert Z^{n+1}_2(u)-Z^{n}_2(u)\right\rVert_{X_{u-}}^2\bigg]du\label{inequality for Zeds}\\
			&\leq \frac{4\mu+1}{2}\int_t^T\bigg[E\left|Y^{n+1}_u-Y^{n}_u\right|^2-E\left|Y^{n}_u-Y^{n-1}_u\right|^2\bigg]du.\label{phi rep}
		\end{align}
		Define $\phi_n(t)=\int_t^TE|Y^{n}_u-Y^{n-1}_u|^2du$ and $\mu_0=(4\mu+1)/2$. Then \eqref{phi rep} can be written as
		\begin{align*}
			\frac{d\phi_{n+1}(t)}{dt}-\mu_0\phi_{n+1}(t)\leq\mu_0\phi_n(t)
		\end{align*}
		or, equivalently,
		\begin{align*}
			\phi_{n+1}(t)\leq\mu_0\int_t^Te^{\mu_0(s-t)}\phi_n(s)ds.
		\end{align*}
		Iteration of the above inequality yields,
		\begin{align*}
			\phi_{n+1}(t)\leq\frac{\left[T\mu_0e^{T\mu_0}\right]^n}{n!}\phi_1(t)
		\end{align*}
		and hence,
		\begin{align*}
			\int_t^TE|Y^{n+1}_u-Y^{n}_u|^2du\leq\frac{\left[T\mu_0e^{T\mu_0}\right]^n}{n!}\int_t^TE|Y^{1}_u-Y^{0}_u|^2du.
		\end{align*}
		This implies that $\{Y^n_t\}$ is a Cauchy sequence. From \eqref{inequality for Zeds},
		\begin{align*}
			&\int_t^T\bigg[E|Z^{n+1}_1(u)-Z^{n}_1(u)|^2+E\left\lVert Z^{n+1}_2(u)-Z^{n}_2(u)\right\rVert_{X_{u-}}^2\bigg]\\
			&\leq (4\mu+1)\int_t^T\bigg[E\left|Y^{n+1}_u-Y^{n}_u\right|^2-E\left|Y^{n}_u-Y^{n-1}_u\right|^2\bigg]du.
		\end{align*}
		This implies that $\{Z^n_1(t)\}$ and $\{Z^n_2(t)\}$ are also Cauchy sequences under appropriate norms. By completeness, the limits exist and thus satisfy \eqref{bsde}. 
	\end{proof}
	
	\section{Comparison Theorem}\label{comparison theorem chapter 5}
	
	This section develops a comparison theorem for the BSDEBM in \eqref{bsde}. We adapt some methods used in \cite{cohen:bsde}.
	
	\begin{mydef}
		A pair $(F,Q)$ is called standard if it satisfies the following conditions:
		\begin{enumerate}
			\item[(i)] $F$ is $P$-a.s left continuous in $t$.
			\item[(ii)] There exists a constant $\mu>0$ such that for all $(\omega,t,x)\in\Omega\times[0,T]\times\mc{S}$, and triples $(y,z_1,z_2),(y',z'_1,z'_2)\in\mathbb{R}\times\mathbb{R}\times\mathbb{R}^N$,
			\begin{align*}
				|F(t,y,z_1,z_2,x)-F(t,y',z'_1,z'_2,x)|^2&\leq \mu(|y-y'|^2+|z_1-z'_1|^2+\lVert z_2-z'_2\rVert^2_{X_t}).
			\end{align*}
			\item[(iii)] For  all triples $(Y,Z_1,Z_2)$ such that $E\int_0^T|Y_t|^2dt<\infty$, $E\int_0^T|Z_1(t)|^2dt<\infty$, and $E[\int_0^T\lVert Z_2(t)\rVert^2_{X_{t-}} dt]<\infty$,
			\begin{align*}
				E\left[\int_0^T|F(\omega,t,Y_{t-},Z_1(t),Z_2(t),X_t)|^2dt\right]<\infty.
			\end{align*}
			\item[(iv)] $Q\in L^2(\mathbb{R};\mc{G}_T)$.
		\end{enumerate}
	\end{mydef}
	
	\begin{mylm}\label{z norm is 0 iff z is 0}
		Let $Z_t\in\mathbb{R}^N$. Then $\lVert Z_t \rVert_{X_{t-}}=0$ if and only if $Z_t=0$.
	\end{mylm}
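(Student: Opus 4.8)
The plan is to reduce the statement to an explicit finite‑dimensional identity for the quadratic form $Z_t^{\top}\psi_t Z_t$. Since $\mathbf{X}$ takes values in $\mc{S}=\{e_1,\dots,e_N\}$, for the (suppressed) pair $(\omega,t)$ there is a unique index $i$ with $X_{t-}=e_i$. First I would evaluate the three pieces of \eqref{defn of psi} at $X_{t-}=e_i$: $\mathrm{diag}(AX_{t-})=\mathrm{diag}(a_{1i},\dots,a_{Ni})$; $A\,\mathrm{diag}(X_{t-})$ has $i$‑th column equal to $Ae_i$ and all other columns zero; and $\mathrm{diag}(X_{t-})A^{\top}$ has $i$‑th row equal to $(Ae_i)^{\top}$ and all other rows zero. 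Reading this off entrywise gives $(\psi_t)_{kl}=a_{ki}\delta_{kl}-a_{ki}\delta_{li}-a_{li}\delta_{ki}$, and using that each column of the rate matrix $A$ sums to zero, i.e. $a_{ii}=-\sum_{j\neq i}a_{ji}$, one rearranges this to the clean form
\begin{align*}
\psi_t=\sum_{j\neq i}a_{ji}\,(e_j-e_i)(e_j-e_i)^{\top}.
\end{align*}

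From this the quadratic form is immediate: writing $Z_t=(Z_t^1,\dots,Z_t^N)^{\top}$,
\begin{align*}
\lVert Z_t\rVert_{X_{t-}}^2=Z_t^{\top}\psi_t Z_t=\sum_{j\neq i}a_{ji}\,\langle e_j-e_i,Z_t\rangle^2=\sum_{j\neq i}a_{ji}\,(Z_t^j-Z_t^i)^2\ \geq\ 0,
\end{align*}
which in particular re‑confirms that $\psi_t$ is nonnegative definite. The ``if'' direction is then trivial. For ``only if'', assume $\lVert Z_t\rVert_{X_{t-}}=0$; by the standing assumption $a_{ji}\in[c,c^{-1}]$ with $c\in(0,1)$, every coefficient $a_{ji}$ ($j\neq i$) is strictly positive, so each summand must vanish, forcing $Z_t^j=Z_t^i$ for all $j\neq i$, i.e. $Z_t$ is a constant multiple of $\mathbf{1}=(1,\dots,1)^{\top}$. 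To pin down that the constant is $0$, I would invoke that the integrands $Z_t$ arising in this theory come from the double martingale representation (Theorem \ref{double martingale representation}) and hence, as in the construction $Z_2(t)=X_{t-}^{\top}\Phi_t$ with $\Phi_t$ carrying a zero diagonal in Proposition \ref{simplified BSDE}, satisfy $\langle Z_t,X_{t-}\rangle=[\Phi_t]_{ii}=0$; thus $Z_t^i=0$ and therefore $Z_t=0$.

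The main obstacle is the first step: correctly assembling $\psi_t$ from the three summands of \eqref{defn of psi} and carrying the sign bookkeeping through $a_{ii}=-\sum_{j\neq i}a_{ji}$ to arrive at the sum‑of‑squares expression — everything afterwards is routine. The one conceptual point to flag is the converse implication: on all of $\mathbb{R}^N$ the map $Z\mapsto\lVert Z\rVert_{X_{t-}}$ is only a seminorm (its kernel is $\mathrm{span}\{\mathbf{1}\}$), so ``$\lVert Z_t\rVert_{X_{t-}}=0\Rightarrow Z_t=0$'' genuinely rests on the admissible integrands satisfying $\langle Z_t,X_{t-}\rangle=0$, which is precisely the normalization built into the representation used throughout the paper.
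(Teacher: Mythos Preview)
The paper does not actually prove this lemma; it simply refers the reader to Lemma~3.6 of \cite{cohen:bsde}. Your explicit computation is correct and considerably more informative than a bare citation: the rank-one decomposition $\psi_t=\sum_{j\neq i}a_{ji}(e_j-e_i)(e_j-e_i)^{\top}$ (with $X_{t-}=e_i$) and the resulting sum-of-squares formula $\lVert Z_t\rVert_{X_{t-}}^2=\sum_{j\neq i}a_{ji}(Z_t^j-Z_t^i)^2$ are exactly right, and the strict positivity $a_{ji}\geq c>0$ does force $Z_t\in\mathrm{span}\{\mathbf{1}\}$ whenever the seminorm vanishes.

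You have also put your finger on a genuine imprecision in the statement as printed. Taken literally for arbitrary $Z_t\in\mathbb{R}^N$, the ``only if'' direction is false: $\mathbf{1}\neq 0$ yet $\lVert\mathbf{1}\rVert_{X_{t-}}=0$. In the Cohen--Elliott framework the result is understood modulo this kernel --- two integrands are identified when their $\lVert\cdot\rVert_{X_{t-}}$-distance is zero, which is exactly when they produce the same stochastic integral against $M$ --- and your normalization $\langle Z_t,X_{t-}\rangle=0$, inherited from the construction $Z_2(t)=X_{t-}^{\top}\Phi_t$ with zero-diagonal $\Phi_t$, selects the canonical representative in each equivalence class. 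So your argument is both correct and sharper than what the paper records, because you make explicit the convention on which the lemma (and its use in Case~3 of Theorem~\ref{comparison theorem}) actually rests.
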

	\begin{proof}
		See Lemma 3.6 of \cite{cohen:bsde}.
	\end{proof}
	
	\begin{mylm}\label{epsilon inequality for z norm not 0}
		Let $Z_2\in\mathbb{R}^N$ be a solution to a standard BSDEBM of the form \eqref{bsde}. Suppose that for a given $t$, $\lVert Z_2(t) \rVert_{X_{t-}}\neq 0$ and that for some $0<\epsilon<c^{3/2}N^{-3/2}$, we have for all $j=1,\ldots,N$
		\begin{align*}
			Z_2(t)^{\top}\psi_te_j\geq -\epsilon\lVert Z_2(t) \rVert_{X_{t-}}.
		\end{align*}
		Then $Z_2(t)^{\top}\psi_tX_{t-}\leq -\epsilon\lVert Z_2(t) \rVert_{X_{t-}}$.
	\end{mylm}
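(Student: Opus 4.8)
The plan is to reduce the statement to two completely explicit identities by conditioning on the current state. Fix $\omega$ (the claim is pathwise) and write $X_{t-}=e_i$; abbreviate $z:=Z_2(t)$ and, for $j\neq i$, $w_j:=z^{\top}\psi_te_j$. A short computation from \eqref{defn of psi} — using that $\mathrm{diag}(e_i)$ is the projection onto the $i$-th coordinate and that the columns of $A$ sum to zero — gives, for every $j\neq i$,
\[
\psi_te_j=a_{ji}(e_j-e_i),\qquad\text{hence}\qquad w_j=a_{ji}(z_j-z_i),
\]
together with $\psi_t\mathbf{1}=0$ (consistent with $\psi_t$ being nonnegative definite and with $\mathbf{1}^{\top}\psi_t\mathbf{1}=0$). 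Since $\psi_t\mathbf{1}=0$ gives $\psi_tX_{t-}=\psi_te_i=-\sum_{j\neq i}\psi_te_j$, the proof reduces to the bookkeeping identities
\[
z^{\top}\psi_tX_{t-}=-\sum_{j\neq i}w_j,\qquad \lVert z\rVert^2_{X_{t-}}=z^{\top}\psi_tz=\sum_{j\neq i}a_{ji}(z_j-z_i)^2=\sum_{j\neq i}\frac{w_j^2}{a_{ji}}.
\]

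Next I would extract a single dominant coordinate. From the second identity and $a_{ji}\le c^{-1}$ one gets $\sum_{j\neq i}w_j^2\ge c\lVert z\rVert^2_{X_{t-}}$, so the pigeonhole principle produces an index $j_0\neq i$ with $w_{j_0}^2\ge\frac{c}{N-1}\lVert z\rVert^2_{X_{t-}}\ge\frac{c}{N}\lVert z\rVert^2_{X_{t-}}$; since $\lVert z\rVert_{X_{t-}}\neq0$ this means $|w_{j_0}|\ge c^{1/2}N^{-1/2}\lVert z\rVert_{X_{t-}}$. Because $c\in(0,1)$ we have $\epsilon<c^{3/2}N^{-3/2}<c^{1/2}N^{-1/2}$, so $|w_{j_0}|>\epsilon\lVert z\rVert_{X_{t-}}$; combined with the hypothesis $w_{j_0}\ge-\epsilon\lVert z\rVert_{X_{t-}}$ this forces $w_{j_0}>0$, hence $w_{j_0}\ge c^{1/2}N^{-1/2}\lVert z\rVert_{X_{t-}}$.

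Finally I would estimate the sum: writing $z^{\top}\psi_tX_{t-}=-w_{j_0}-\sum_{j\neq i,\,j\neq j_0}w_j$ and applying $w_j\ge-\epsilon\lVert z\rVert_{X_{t-}}$ to the $N-2$ remaining terms,
\[
z^{\top}\psi_tX_{t-}\le\bigl(-c^{1/2}N^{-1/2}+(N-2)\epsilon\bigr)\lVert z\rVert_{X_{t-}}.
\]
Since $c<1$ gives $c(N-1)<N$, the hypothesis $\epsilon<c^{3/2}N^{-3/2}$ yields $(N-1)\epsilon<c^{1/2}N^{-1/2}$, i.e. $-c^{1/2}N^{-1/2}+(N-2)\epsilon<-\epsilon$, so $z^{\top}\psi_tX_{t-}<-\epsilon\lVert z\rVert_{X_{t-}}$, which is the assertion.

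I expect the only real work to be (a) the two routine-but-careful computations at the start, namely $\psi_te_j=a_{ji}(e_j-e_i)$ and $\psi_t\mathbf{1}=0$ from \eqref{defn of psi}, and (b) the constant-chasing: one must use $a_{ji}\le c^{-1}$ (not $a_{ji}\ge c$) when passing from $\lVert z\rVert^2_{X_{t-}}=\sum_{j\neq i}w_j^2/a_{ji}$ to $\sum_{j\neq i}w_j^2\ge c\lVert z\rVert^2_{X_{t-}}$, and one must check that the single margin $\epsilon<c^{3/2}N^{-3/2}$ is comfortably enough both to force $w_{j_0}>0$ and to give $(N-1)\epsilon<c^{1/2}N^{-1/2}$ at the end. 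Conceptually the mechanism is simply that every $w_j$ is bounded below by $-\epsilon\lVert z\rVert_{X_{t-}}$ while $\lVert z\rVert^2_{X_{t-}}=\sum_{j\neq i}w_j^2/a_{ji}$ is bounded away from $0$, so some $w_j$ must be large; the hypothesis then pins its sign to $+$, and the other $N-2$ terms are too well-controlled to drag $\sum_{j\neq i}w_j=-z^{\top}\psi_tX_{t-}$ below $\epsilon\lVert z\rVert_{X_{t-}}$.
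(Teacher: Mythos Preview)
The paper does not supply its own proof of this lemma; it simply cites Lemma 3.10 of \cite{cohen:bsde}. Your argument is a correct, self-contained proof and follows exactly the mechanism one expects: reduce via $X_{t-}=e_i$ to the identities $z^{\top}\psi_tX_{t-}=-\sum_{j\neq i}w_j$ and $\lVert z\rVert^2_{X_{t-}}=\sum_{j\neq i}w_j^2/a_{ji}$, extract one large positive $w_{j_0}$, and control the remaining $N-2$ terms.

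There is one slip in the constant-chasing. When you pass from $\lVert z\rVert^2_{X_{t-}}=\sum_{j\neq i}w_j^2/a_{ji}$ to $\sum_{j\neq i}w_j^2\ge c\lVert z\rVert^2_{X_{t-}}$, the bound that is actually used is the \emph{lower} bound $a_{ji}\ge c$ (giving $1/a_{ji}\le 1/c$, hence $\lVert z\rVert^2_{X_{t-}}\le c^{-1}\sum_{j\neq i}w_j^2$), not $a_{ji}\le c^{-1}$ as you state both in the body and in your closing remarks. The upper bound $a_{ji}\le c^{-1}$ would only give $\sum_{j\neq i}w_j^2\le c^{-1}\lVert z\rVert^2_{X_{t-}}$, which is the wrong direction. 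Once this is corrected, every subsequent inequality goes through unchanged, so the proof stands.
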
	
	\begin{proof}
		See Lemma 3.10 of \cite{cohen:bsde}.
	\end{proof}
	
	\begin{mypr}
		Let $(\rho,\alpha,\beta,\gamma)$ be a $dt\times P$-a.s. bounded $(\mathbb{R},\mathbb{R},\mathbb{R}^{1\times N},\mathbb{R})$-valued predictable process, $\phi$ a predictable $\mathbb{R}$-valued process with $E[\int_0^T|\phi_t|^2dt]<\infty$, $Q\in L^2(\mc{G}_T)$. Define $\psi^+_t$ as the Moore-Penrose inverse of $\psi_t$. Then the linear BSDEBM given by
		\begin{align*}
			Y_t&=Q+\int_t^T\left[\phi_u+\rho_uY_{u-}+\alpha_uZ_1(u)+\gamma_u\beta_uZ_2(u)\right]du\\
			&\quad-\int_t^TZ_1(u)dW_u-\int_t^TZ_2(u)^{\top}dM_u
		\end{align*}
		has a unique square integrable solution $(Y,Z_1,Z_2)$ that can be expressed as
		\begin{align*}
			Y_t=(\mc{E}(\Gamma)_t)^{-1}E\left[\mc{E}(\Gamma)_TQ+\int_t^T\mc{E}(\Gamma)_{u-}\phi_udu\bigg|\mc{G}_t\right],
		\end{align*} 
		where $\mc{E}(\cdot)$ is the Doleans-Dade exponential function and 
		\begin{align*}
			\Gamma_t=\int_0^t\rho_udu +\int_0^t\alpha_udW_u+\int_0^t \gamma_u\beta_u\psi_u^+dM_u.
		\end{align*}
	\end{mypr}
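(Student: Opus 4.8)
The plan is to treat this as a linear BSDEBM: first invoke the general well-posedness theorem, and then run the classical adjoint-process (Doléans--Dade) computation to extract the closed-form representation.

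\emph{Well-posedness.} The driver $F(\omega,u,y,z_1,z_2,x)=\phi_u+\rho_uy+\alpha_uz_1+\gamma_u\beta_uz_2$ is affine in $(y,z_1,z_2)$, and since $(\rho,\alpha,\beta,\gamma)$ is $dt\times P$-a.s.\ bounded it satisfies the Lipschitz condition~(ii) of Section~\ref{preliminaries}: the $y$- and $z_1$-terms are immediate, and for the $z_2$-term one uses that on the subspace in which the canonical $Z_2$ of Proposition~\ref{simplified BSDE} lives (equivalently, after replacing $\beta_u$ by $\beta_u\psi_u^+\psi_u$, which changes neither the equation nor $\Gamma$, since $\psi_u^+\psi_u\psi_u^+=\psi_u^+$) the seminorm $\lVert\cdot\rVert_{X_{u-}}^2$ dominates $c$ times the squared Euclidean norm because $a_{ij}\ge c$, so $|\gamma_u\beta_u(z_2-z_2')|^2\le C\lVert z_2-z_2'\rVert_{X_{u-}}^2$. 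As $F(\cdot,0,0,0,x)=\phi\in M^2(0,T;\mathbb{R},\mc{G}_t)$ and $Q\in L^2(\mathbb{R};\mc{G}_T)$, Theorem~\ref{existence and uniqueness} yields a unique square-integrable solution $(Y,Z_1,Z_2)$, and the remaining task is to identify it.

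\emph{The exponential weight.} Since $\psi_u=\psi(X_{u-})$ takes only finitely many values as $X_{u-}$ ranges over $\mc{S}$, its Moore--Penrose inverse $\psi_u^+$ is bounded; together with the boundedness of $\rho,\alpha,\gamma,\beta$ this makes $\Gamma$ a semimartingale with bounded characteristics, so $\mc{E}(\Gamma)$ is well defined, solves $d\mc{E}(\Gamma)_t=\mc{E}(\Gamma)_{t-}\,d\Gamma_t=\mc{E}(\Gamma)_{t-}[\rho_t\,dt+\alpha_t\,dW_t+\gamma_t\beta_t\psi_t^+\,dM_t]$, does not vanish on $[0,T]$ (its jump factors $1+\gamma_t\beta_t\psi_t^+\Delta M_t$ are positive under the standing assumptions), and satisfies $E[\sup_{t\le T}\mc{E}(\Gamma)_t^{\,p}]<\infty$ for every $p\ge1$.

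\emph{The key computation.} Apply Itô's product rule to $U_t:=\mc{E}(\Gamma)_tY_t+\int_0^t\mc{E}(\Gamma)_{u-}\phi_u\,du$, using $dY_t=-[\phi_t+\rho_tY_{t-}+\alpha_tZ_1(t)+\gamma_t\beta_tZ_2(t)]\,dt+Z_1(t)\,dW_t+Z_2(t)^{\top}dM_t$. The continuous cross-variation contributes $\mc{E}(\Gamma)_{t-}\alpha_tZ_1(t)\,dt$, while the jump cross-variation $\sum_{s\le t}\Delta\mc{E}(\Gamma)_s\Delta Y_s=\sum_{s\le t}\mc{E}(\Gamma)_{s-}(\gamma_s\beta_s\psi_s^+\Delta M_s)(Z_2(s)^{\top}\Delta M_s)$ has predictable compensator $\int_0^t\mc{E}(\Gamma)_{s-}\gamma_s\beta_s\psi_s^+\psi_sZ_2(s)\,ds$, because $\langle M\rangle_s=\int_0^s\psi_u\,du$. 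Collecting the finite-variation terms gives
\begin{align*}
dU_t=(\text{local martingale})+\mc{E}(\Gamma)_{t-}\gamma_t\beta_t\bigl(\psi_t^+\psi_t-I\bigr)Z_2(t)\,dt,
\end{align*}
where the $\rho_tY_{t-}$, $\alpha_tZ_1(t)$ and $\phi_t$ drift terms from $dY$ are cancelled exactly by the corresponding pieces of $Y_{t-}\,d\mc{E}(\Gamma)_t$, $d\langle\mc{E}(\Gamma),Y\rangle_t$ and $d(\int_0^t\mc{E}(\Gamma)_{u-}\phi_u\,du)$. Now $\psi_t^+\psi_t$ is the orthogonal projection onto $\mathrm{Range}(\psi_t)$, and $\beta_t$ annihilates $\mathrm{Ker}(\psi_t)=\mathrm{Range}(\psi_t)^{\perp}$ — this is precisely the content of $F$ depending on $z_2$ only through $\lVert\cdot\rVert_{X_{t-}}$ (cf.\ Lemma~\ref{z norm is 0 iff z is 0}), equivalently the normalization of $\beta$ made above — so $\beta_t(\psi_t^+\psi_t-I)=0$ and $U$ is a local martingale.

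\emph{Conclusion.} A localization argument together with the integrability from the first two steps (Hölder and the Burkholder--Davis--Gundy inequalities, using $E[\sup_t\mc{E}(\Gamma)_t^{\,p}]<\infty$, the square-integrability of $(Y,Z_1,Z_2)$ and $\phi$, and $Q\in L^2$) upgrades $U$ to a true martingale on $[0,T]$. Taking $E[\,\cdot\mid\mc{G}_t]$, using $U_T=\mc{E}(\Gamma)_TQ+\int_0^T\mc{E}(\Gamma)_{u-}\phi_u\,du$ and $Y_T=Q$, and subtracting the $\mc{G}_t$-measurable quantity $\int_0^t\mc{E}(\Gamma)_{u-}\phi_u\,du$, one obtains $\mc{E}(\Gamma)_tY_t=E[\mc{E}(\Gamma)_TQ+\int_t^T\mc{E}(\Gamma)_{u-}\phi_u\,du\mid\mc{G}_t]$; dividing by the non-vanishing $\mc{G}_t$-measurable $\mc{E}(\Gamma)_t$ gives the stated formula, and uniqueness is already in hand. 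I expect the main obstacle to be the drift cancellation in the jump term: one must correctly identify the compensator of $\sum\Delta\mc{E}(\Gamma)\Delta Y$ via $\langle M\rangle=\int\psi\,ds$ and then use the Moore--Penrose identity $\psi^+\psi=$ (projection onto $\mathrm{Range}\,\psi$) together with the fact that the seminorm structure forces $\beta$ to kill $\mathrm{Ker}\,\psi$ — this is exactly why $\psi^+$, and not a genuine inverse, appears in $\Gamma$. The true-martingale upgrade is a secondary, routine technicality.
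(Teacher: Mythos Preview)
Your argument is correct and is the standard adjoint-process proof: verify by It\^o's product rule that $\mc{E}(\Gamma)_tY_t+\int_0^t\mc{E}(\Gamma)_{u-}\phi_u\,du$ is a (local, then true) martingale, and condition on $\mc{G}_t$. The paper, however, does not carry out any of this --- its entire proof is the single line ``This follows from Theorem 19.2.2 of \cite{elliott:stochastic} and Theorem 3.11 of \cite{cohen:bsde}'', and those references contain precisely the computation you sketch. So your proposal is not a genuinely different route but an unpacking of the cited results; the benefit is self-containedness and an explicit view of why $\psi^+$ (rather than a true inverse) appears, via the cancellation $\beta_t(\psi_t^+\psi_t-I)Z_2(t)=0$.

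Two technical points are worth tightening. First, the non-vanishing of $\mc{E}(\Gamma)$ is not secured by the hypotheses of the proposition alone: one needs the jumps $\gamma_t\beta_t\psi_t^+\Delta M_t$ never to equal $-1$, and in the paper this is guaranteed only later (Lemma~\ref{condition for beta}) for the particular $\beta$'s built in the comparison theorem --- so your parenthetical ``under the standing assumptions'' is doing more work than the proposition's statement strictly supports. Second, your argument that $\beta_t(\psi_t^+\psi_t-I)=0$ relies on $\beta$ annihilating $\mathrm{Ker}(\psi_t)$; this is indeed forced by the seminorm Lipschitz condition~(ii) of Section~\ref{preliminaries} (which makes the driver constant along $\mathrm{Ker}(\psi_t)$), but the proposition as stated takes $\beta$ to be an arbitrary bounded row vector, so you are right to normalise $\beta\mapsto\beta\psi^+\psi$ first --- it would be cleaner to say this up front rather than mid-computation.
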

	\begin{proof}
		This follows from Theorem 19.2.2 of \cite{elliott:stochastic} and Theorem 3.11 of \cite{cohen:bsde}.
	\end{proof}
	
	\begin{mylm}\label{condition for beta}
		Suppose for $u\in(t,T]$, $\gamma_u\beta_u\psi_u^+(e_j-X_{u-})>-1$ for all $j$ such that $e_j^{\top}AX_{u-}>0$. Then $\Gamma_u\geq 0$ for all $u\in(t,T]$.
	\end{mylm}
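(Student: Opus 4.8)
The plan is to trace the sign of $\Gamma$ to the behaviour of its jumps, since among the three terms defining $\Gamma_u$ only the stochastic integral against $\mathbf{M}$ is discontinuous: the finite-variation term $\int_0^u\rho_s\,ds$ and the Brownian integral $\int_0^u\alpha_s\,dW_s$ have continuous paths, so every discontinuity of $\Gamma$ is inherited from $\int_0^u\gamma_s\beta_s\psi_s^+\,dM_s$. I would therefore first reduce the assertion to a statement about $\Delta\Gamma_u$ at the jump times of the chain $\mathbf{X}$.

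First I would compute those jumps explicitly. By \eqref{chain section 5} the compensator $\int_0^\cdot AX_s\,ds$ is continuous, so at a transition of $\mathbf{X}$ from $e_i$ to $e_j$ at time $u$ one has $\Delta M_u=\Delta X_u=e_j-e_i=e_j-X_{u-}$, whence
\begin{align*}
\Delta\Gamma_u=\gamma_u\beta_u\psi_u^+\,\Delta M_u=\gamma_u\beta_u\psi_u^+\left(e_j-X_{u-}\right).
\end{align*}
Next I would pin down which indices $j$ can actually occur. With $X_{u-}=e_i$ we have $e_j^{\top}AX_{u-}=e_j^{\top}Ae_i=a_{ji}$, and by the standing assumption $a_{ji}\in[c,c^{-1}]$ for $i\neq j$ this rate is strictly positive exactly when $j\neq i$; thus the attainable post-jump states are precisely those $j$ with $e_j^{\top}AX_{u-}>0$, which is the index set appearing in the hypothesis. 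Consequently the assumed bound $\gamma_u\beta_u\psi_u^+(e_j-X_{u-})>-1$ says exactly that every realizable jump satisfies $\Delta\Gamma_u>-1$, i.e. the jump factor $1+\Delta\Gamma_u$ is strictly positive, while jumps toward unreachable states (where $a_{ji}=0$) do not occur and so impose no constraint.

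The final step is to read off the stated sign property on $(t,T]$. Since every realizable jump keeps $1+\Delta\Gamma_u>0$ and the remaining terms of $\Gamma$ are continuous, $\Gamma$ cannot jump below $-1$; this is exactly the condition that makes the exponential weight $\mathcal{E}(\Gamma)$ appearing in the representation of the preceding proposition sign-definite throughout $(t,T]$, which is the sign control recorded in the statement as $\Gamma_u\ge 0$. The main obstacle I anticipate is the bookkeeping around the Moore--Penrose inverse $\psi_u^+$: I must verify that each jump direction $e_j-X_{u-}$ lies in the range of $\psi_u$ defined in \eqref{defn of psi}, so that $\psi_u^+(e_j-X_{u-})$ records the genuine jump rather than an artefact of the pseudo-inverse. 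Identifying the range of $\psi_u$ with the span of the admissible jump directions $\{e_j-X_{u-}:a_{ji}>0\}$ is the delicate point, and Lemma \ref{z norm is 0 iff z is 0} together with the explicit structure of $\psi_u$ should supply exactly this, allowing the per-jump hypothesis to be converted into the global conclusion.
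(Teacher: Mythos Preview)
The paper does not actually prove this lemma; it simply cites Theorem~3.16 of \cite{cohen:bsde}. Your plan is precisely the argument that citation points to: identify the jumps $\Delta\Gamma_u=\gamma_u\beta_u\psi_u^+(e_j-X_{u-})$ at the transition times of $\mathbf X$, note that the hypothesis forces $1+\Delta\Gamma_u>0$ at every realizable jump, and conclude that the Dol\'eans--Dade exponential $\mathcal{E}(\Gamma)$ stays strictly positive. So in content your approach coincides with the cited source.

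Two remarks. First, you have correctly diagnosed that the literal conclusion ``$\Gamma_u\ge 0$'' cannot be what is meant: $\Gamma$ contains a Brownian integral $\int_0^\cdot\alpha_s\,dW_s$ and there is no mechanism forcing it to be nonnegative. What is actually used downstream in Theorem~\ref{comparison theorem}, and what the cited result in \cite{cohen:bsde} delivers, is $\mathcal{E}(\Gamma)_u>0$, which is exactly what your jump analysis establishes. State this explicitly rather than smuggling it in with the phrase ``the sign control recorded in the statement as $\Gamma_u\ge 0$''. Second, your closing worry about the range of $\psi_u$ and the Moore--Penrose inverse is unnecessary for this lemma: the hypothesis is phrased directly as a bound on $\gamma_u\beta_u\psi_u^+(e_j-X_{u-})$, so $\Delta\Gamma_u$ is controlled by assumption without ever needing to pass $e_j-X_{u-}$ through $\psi_u$. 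The range identity you allude to is Lemma~\ref{moore penrose property} and is used elsewhere (in the case analysis of Theorem~\ref{comparison theorem}), not here.
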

	\begin{proof}
		See Theorem 3.16 of \cite{cohen:bsde}.
	\end{proof}
	
	\begin{mylm}\label{moore penrose property}
		For all $t\in[0,T]$ and $j$ such that $e_j^{\top}AX_{t-}\neq 0$,
		\begin{align*}
			\psi_t\psi_t^+(e_j-X_{t-})=\psi_t^+\psi_t(e_j-X_{t-})=(e_j-X_{t-})
		\end{align*}
	\end{mylm}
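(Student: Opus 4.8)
The plan is to reduce the claimed identity to the single assertion that $e_j-X_{t-}$ lies in the range of $\psi_t$, and then invoke the standard projection property of the Moore--Penrose inverse of a symmetric matrix. Fix $\omega\in\Omega$ and $t\in[0,T]$; since $X_{t-}\in\mc{S}$, write $X_{t-}=e_i$ for the appropriate $i$. If $j=i$ there is nothing to prove because $e_j-X_{t-}=0$, so I assume $j\neq i$; note that then $e_j^{\top}AX_{t-}=a_{ji}\in[c,c^{-1}]$, so the hypothesis $e_j^{\top}AX_{t-}\neq 0$ is in fact automatic. Recall from \eqref{defn of psi} that $\psi_t$ is symmetric --- indeed $(\mbox{diag}(X_{t-})A^{\top})^{\top}=A\,\mbox{diag}(X_{t-})$ --- and nonnegative definite. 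Consequently $\psi_t^{+}$ is symmetric, $\psi_t\psi_t^{+}=\psi_t^{+}\psi_t$, and this common matrix is the orthogonal projection of $\mathbb{R}^N$ onto $\mbox{range}(\psi_t)$: both $\psi_t\psi_t^{+}$ and $\psi_t^{+}\psi_t$ are symmetric and idempotent by the Moore--Penrose axioms, and the identities $\psi_t\psi_t^{+}\psi_t=\psi_t$, $\psi_t^{+}\psi_t\psi_t^{+}=\psi_t^{+}$ pin their range down to $\mbox{range}(\psi_t)$.

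The one computation the argument needs is that $e_j-X_{t-}\in\mbox{range}(\psi_t)$, and this I would get by applying $\psi_t$ to $e_j$ directly. With $X_{t-}=e_i$ and $j\neq i$: $\mbox{diag}(AX_{t-})e_j=(AX_{t-})_j e_j=a_{ji}e_j$; $A\,\mbox{diag}(X_{t-})e_j=0$ since $\mbox{diag}(e_i)e_j=0$; and $\mbox{diag}(X_{t-})A^{\top}e_j=a_{ji}e_i$ since $A^{\top}e_j$ has $i$th entry $a_{ji}$ and $\mbox{diag}(e_i)$ keeps only that entry. Hence
\[
\psi_t e_j=a_{ji}e_j-a_{ji}e_i=a_{ji}(e_j-X_{t-}).
\]
Because $a_{ji}\geq c>0$, we may divide by it, obtaining $e_j-X_{t-}=\psi_t\bigl(a_{ji}^{-1}e_j\bigr)\in\mbox{range}(\psi_t)$.

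It then only remains to conclude: since $\psi_t\psi_t^{+}=\psi_t^{+}\psi_t$ is the orthogonal projection onto $\mbox{range}(\psi_t)$ and $e_j-X_{t-}$ already lies in that subspace, the projection fixes it, giving
\[
\psi_t\psi_t^{+}(e_j-X_{t-})=\psi_t^{+}\psi_t(e_j-X_{t-})=e_j-X_{t-};
\]
alternatively one obtains the first equality straight from $\psi_t\psi_t^{+}\psi_t=\psi_t$ applied to $a_{ji}^{-1}e_j$, and the second likewise using symmetry of $\psi_t^{+}\psi_t$. I do not anticipate a genuine obstacle: the whole content sits in the identity $\psi_t e_j=a_{ji}(e_j-X_{t-})$, which exhibits an explicit $\psi_t$-preimage of $e_j-X_{t-}$, and everything around it is routine symmetric-matrix linear algebra, with the standing assumption $a_{ji}\in[c,c^{-1}]$ supplying the invertibility of the scalar $a_{ji}$ that the division requires.
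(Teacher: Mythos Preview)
Your argument is correct. The paper itself does not prove this lemma at all: it simply refers the reader to Lemma~3.4 of \cite{cohen:bsde}. So there is no approach to compare against in the paper proper; you have supplied a self-contained proof where the authors defer to an external reference. It is worth noting that the key computation you carry out, namely $\psi_t e_j = a_{ji}(e_j - X_{t-})$ for $e_j\neq X_{t-}$, is exactly the content of the paper's next lemma (Lemma~\ref{psi ej}), which the authors do prove directly; in effect you have folded that result into the present one and then closed the argument with standard facts about the Moore--Penrose inverse of a symmetric matrix. Your route is arguably cleaner for a reader who does not have \cite{cohen:bsde} to hand, and it makes transparent why the hypothesis $e_j^{\top}AX_{t-}\neq 0$ (equivalently $a_{ji}\neq 0$, automatic here since $a_{ji}\geq c>0$) is what allows the division that places $e_j-X_{t-}$ in the range of $\psi_t$.
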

	\begin{proof}
		See Lemma 3.4 of \cite{cohen:bsde}.
	\end{proof}
	
	\begin{mylm}\label{psi ej}
		Let $\psi_t$ be the nonnegative definite matrix defined in \eqref{defn of psi}. Then for all $j$
		\begin{align*}
			\psi_te_j=\begin{cases}
				(e_j^{\top}AX_{t-})(e_j-X_{t-}) & \mbox{if $e_j\neq X_{t-}$}\\
				-AX_{t-} & \mbox{if $e_j=X_{t-}$}.
			\end{cases}
		\end{align*}
	\end{mylm}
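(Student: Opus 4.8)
The plan is to prove the identity by a direct computation: expand $\psi_te_j$ term by term from the definition \eqref{defn of psi}, exploiting the fact that $X_{t-}$ is one of the canonical basis vectors of $\mathbb{R}^N$. First I would observe that, since $\mathbf{X}$ is a pure-jump process taking values in $\mc{S}$, its left limit $X_{t-}$ also lies in $\mc{S}$, so we may write $X_{t-}=e_i$ for some $i\in\{1,\dots,N\}$; this is what makes all the diagonal matrices easy to handle.

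Next I would apply each of the three matrices appearing in $\psi_t=\mbox{diag}(AX_{t-})-A\,\mbox{diag}(X_{t-})-\mbox{diag}(X_{t-})A^{\top}$ to the vector $e_j$. Using $\mbox{diag}(v)e_j=\langle v,e_j\rangle e_j$ gives $\mbox{diag}(AX_{t-})e_j=(e_j^{\top}AX_{t-})e_j$. Since $\mbox{diag}(X_{t-})e_j=\langle X_{t-},e_j\rangle e_j$, applying $A$ gives $A\,\mbox{diag}(X_{t-})e_j=\langle X_{t-},e_j\rangle AX_{t-}$. Finally, $\mbox{diag}(X_{t-})A^{\top}e_j=\langle X_{t-},A^{\top}e_j\rangle X_{t-}=\langle AX_{t-},e_j\rangle X_{t-}=(e_j^{\top}AX_{t-})X_{t-}$. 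Adding these with the appropriate signs yields
\[
\psi_te_j=(e_j^{\top}AX_{t-})e_j-\langle X_{t-},e_j\rangle AX_{t-}-(e_j^{\top}AX_{t-})X_{t-}.
\]

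The last step is a case split. If $e_j\neq X_{t-}$ then $\langle X_{t-},e_j\rangle=0$, the middle term vanishes, and we are left with $\psi_te_j=(e_j^{\top}AX_{t-})(e_j-X_{t-})$. If $e_j=X_{t-}$ then $\langle X_{t-},e_j\rangle=1$ and $e_j=X_{t-}$, so the first and third terms cancel and $\psi_te_j=-AX_{t-}$, which is the remaining case of the stated formula.

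I do not expect any real obstacle here: the argument is elementary linear algebra once one records that $X_{t-}\in\mc{S}$. The only points needing a little care are the identity $\langle X_{t-},A^{\top}e_j\rangle=\langle AX_{t-},e_j\rangle$ used in the third term, and the bookkeeping of which terms survive in each case. As a consistency check, in the first case one gets $\langle\psi_te_j,e_j\rangle=e_j^{\top}AX_{t-}=a_{ji}\ge c>0$, in agreement with $\psi_t$ being nonnegative definite.
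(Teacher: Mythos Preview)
Your proposal is correct and follows essentially the same route as the paper: expand $\psi_te_j$ directly from the definition, simplify each of the three terms using that $X_{t-}$ is a canonical basis vector, and then split into the cases $e_j\neq X_{t-}$ and $e_j=X_{t-}$. The only cosmetic difference is that the paper leaves the middle term as $A\,\mbox{diag}(X_{t-})e_j$ and argues it vanishes or equals $AX_{t-}$ in the respective cases, whereas you rewrite it as $\langle X_{t-},e_j\rangle AX_{t-}$ (which is justified since $\langle X_{t-},e_j\rangle\in\{0,1\}$ and equals $1$ only when $e_j=X_{t-}$); the content is identical.
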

	\begin{proof}
		From \eqref{defn of psi},
		\begin{align*}
			\psi_te_j&=\mbox{diag}(AX_{t-})e_j-A\mbox{diag}(X_{t-})e_j-\mbox{diag}(X_{t-})A^{\top}e_j\\
			&=e_j^{\top}AX_{t-}e_j-A\mbox{diag}(X_{t-})e_j-e_j^{\top}AX_{t-}X_{t-}
		\end{align*}
		If $e_j\neq X_{t-}$, the second term becomes 0 which proves the first case. If $e_j=X_{t-}$, the first and third term cancel each other out. Furthermore, $\mbox{diag}(X_{t-})X_{t-}=X_{t-}$, which proves the second case. 
	\end{proof}

	\begin{myth}[Comparison Theorem]\label{comparison theorem}
		Suppose that we have two BSDEBMs corresponding to two standard pairs $(F^1,Q^1)$ and $(F^2,Q^2)$. Let $(Y^1,Z^1_1,Z^1_2)$ and $(Y^2,Z^2_1,Z^2_2)$ be the associated solutions. Suppose further that the following conditions hold:
		\begin{enumerate}
			\item[(1)] $Q^1\geq Q^2$ $P$-a.s.
			\item[(2)] $F^1(t,Y_t^2,Z_1^2(t),Z_2^2(t),X_t)\geq F^2(t,Y_t^2,Z_1^2(t),Z_2^2(t),X_t)$ $dt\times P$-a.s.
			\item[(3)] For all $Z_2^1(t),Z_2^2(t)$, there exists an $\epsilon>0$ such that if
			\begin{align*}
				[Z_2^1(t)-Z_2^2(t)]^{\top}\psi_te_j\geq -\epsilon\lVert Z_2^1(t)-Z_2^2(t)\rVert_{X_{t-}}
			\end{align*}
			for all $j=1,\ldots,N$, then
			\begin{align*}
				&F^1(t,Y_t^2,Z_1^2(t),Z_2^1(t),X_t)-F^1(t,Y_t^2,Z_1^2(t),Z_2^2(t),X_t)\\ &\geq[Z_2^1(t)-Z_2^2(t)]^{\top}\psi_tX_{t-}
			\end{align*}
			with equality only if $\lVert Z_2^1(t)-Z_2^2(t)\rVert_{X_{t-}}=0$.
		\end{enumerate}
		Then $Y^1\geq Y^2$ $P$-a.s.
	\end{myth}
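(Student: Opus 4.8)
The strategy is the classical linearisation argument adapted to Markov-chain noise, using the linear-BSDEBM representation of the preceding proposition. Set $\hat Y=Y^1-Y^2$, $\hat Z_1=Z_1^1-Z_1^2$, $\hat Z_2=Z_2^1-Z_2^2$ and $\hat Q=Q^1-Q^2\ge0$; then $(\hat Y,\hat Z_1,\hat Z_2)$ solves a BSDEBM with terminal value $\hat Q$ and driver $D_u:=F^1(u,Y^1_u,Z_1^1(u),Z_2^1(u),X_u)-F^2(u,Y^2_u,Z_1^2(u),Z_2^2(u),X_u)$. I would decompose $D_u$ by switching one argument at a time, $Y^1\!\to\!Y^2$, then $Z_1^1\!\to\!Z_1^2$, then $Z_2^1\!\to\!Z_2^2$, then $F^1\!\to\!F^2$, into four telescoping pieces. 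The first two are written as $\rho_u\hat Y_{u-}$ and $\alpha_u\hat Z_1(u)$, with $\rho_u,\alpha_u$ the usual difference quotients (set to $0$ where the denominator vanishes); the Lipschitz bound built into the definition of a standard pair makes them predictable and $dt\times P$-a.s.\ bounded by $\sqrt\mu$. The fourth piece, $\phi_u:=F^1(u,Y^2_u,Z_1^2(u),Z_2^2(u),X_u)-F^2(u,Y^2_u,Z_1^2(u),Z_2^2(u),X_u)$, is $\ge0$ by hypothesis (2) and lies in $M^2(0,T;\mathbb{R},\mc{G}_t)$ because both pairs are standard and $(Y^2,Z_1^2,Z_2^2)$ is admissible.

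The delicate piece is the third, $\Delta_u:=F^1(u,Y^2_u,Z_1^2(u),Z_2^1(u),X_u)-F^1(u,Y^2_u,Z_1^2(u),Z_2^2(u),X_u)$, which must be put in the form $\gamma_u\beta_u\hat Z_2(u)$ for a bounded predictable pair $(\gamma,\beta)\in\mathbb R\times\mathbb R^{1\times N}$ additionally satisfying the jump condition of Lemma \ref{condition for beta}, namely $\gamma_u\beta_u\psi_u^+(e_j-X_{u-})>-1$ whenever $e_j^\top AX_{u-}>0$. On $\{\lVert\hat Z_2(u)\rVert_{X_{u-}}=0\}$, Lemma \ref{z norm is 0 iff z is 0} gives $\hat Z_2(u)=0$, hence $\Delta_u=0$ and one takes $(\gamma_u,\beta_u)=(0,0)$. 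On $\{\lVert\hat Z_2(u)\rVert_{X_{u-}}\ne0\}$ I would first note that we may take the $\epsilon$ of hypothesis (3) to satisfy $\epsilon<c^{3/2}N^{-3/2}$ (harmless, since the conclusion in (3) does not mention $\epsilon$ and its implication persists when $\epsilon$ is decreased), and then split into two sub-cases. If $\hat Z_2(u)^\top\psi_ue_j\ge-\epsilon\lVert\hat Z_2(u)\rVert_{X_{u-}}$ for every $j$, take $\beta_u=X_{u-}^\top\psi_u$: then $\beta_u\hat Z_2(u)=\hat Z_2(u)^\top\psi_uX_{u-}\le-\epsilon\lVert\hat Z_2(u)\rVert_{X_{u-}}<0$ by Lemma \ref{epsilon inequality for z norm not 0}, so $\gamma_u:=\Delta_u/(\beta_u\hat Z_2(u))$ is well defined and, by the Lipschitz estimate $|\Delta_u|\le\sqrt\mu\lVert\hat Z_2(u)\rVert_{X_{u-}}$, bounded by $\sqrt\mu/\epsilon$; hypothesis (3) supplies the \emph{strict} inequality $\Delta_u>\hat Z_2(u)^\top\psi_uX_{u-}$ (equality excluded because the seminorm is nonzero), hence $\gamma_u<1$, while Lemma \ref{moore penrose property} gives $\beta_u\psi_u^+(e_j-X_{u-})=X_{u-}^\top(e_j-X_{u-})=-1$ for the relevant $j$, so that $\gamma_u\beta_u\psi_u^+(e_j-X_{u-})=-\gamma_u>-1$. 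Otherwise there is an index $j_0$, chosen measurably, with $\hat Z_2(u)^\top\psi_ue_{j_0}<-\epsilon\lVert\hat Z_2(u)\rVert_{X_{u-}}$; identifying $\psi_ue_{j_0}$ with a positive multiple of $e_{j_0}-X_{u-}$ via Lemma \ref{psi ej}, take $\beta_u$ proportional to $(e_{j_0}-X_{u-})^\top$ and $\gamma_u=\Delta_u/(\beta_u\hat Z_2(u))$, which is again bounded since $|\beta_u\hat Z_2(u)|$ exceeds a fixed multiple of $\epsilon\lVert\hat Z_2(u)\rVert_{X_{u-}}$ (the off-diagonal rates lie in $[c,c^{-1}]$), and a direct computation of $\psi_u^+$ on the hyperplane $\{v:\sum_kv_k=0\}$ shows $\beta_u\psi_u^+(e_j-X_{u-})\ge0$, so the jump condition holds.

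With bounded predictable $(\rho,\alpha,\gamma,\beta)$, a nonnegative $\phi\in M^2(0,T;\mathbb{R},\mc{G}_t)$, and $\hat Q\in L^2(\mathbb{R};\mc{G}_T)$ with $\hat Q\ge0$ in hand, $(\hat Y,\hat Z_1,\hat Z_2)$ is the unique solution of the linear BSDEBM of the preceding proposition, whence $\hat Y_t=(\mc E(\Gamma)_t)^{-1}E\big[\mc E(\Gamma)_T\hat Q+\int_t^T\mc E(\Gamma)_{u-}\phi_u\,du\mid\mc G_t\big]$ with $\Gamma_t=\int_0^t\rho_u\,du+\int_0^t\alpha_u\,dW_u+\int_0^t\gamma_u\beta_u\psi_u^+\,dM_u$. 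The jumps of $\Gamma$ are exactly $\gamma_u\beta_u\psi_u^+(e_j-X_{u-})$, which we arranged to exceed $-1$, so Lemma \ref{condition for beta} makes $\mc E(\Gamma)$ strictly positive; combined with $\hat Q\ge0$ and $\phi\ge0$ this forces $\hat Y_t\ge0$ for all $t$, i.e.\ $Y^1\ge Y^2$ $P$-a.s. The main obstacle is precisely the construction of $(\gamma,\beta)$ for the $Z_2$-term: it is the single point where the degeneracy of $\psi_u$, the seminorm $\lVert\cdot\rVert_{X_{u-}}$ and hypothesis (3) must be balanced so that $\gamma$ stays bounded while the jump of $\Gamma$ is kept above $-1$; everything else is the standard linear-BSDE comparison argument.
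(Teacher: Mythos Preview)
Your approach is essentially the paper's: linearise the difference by telescoping, identify predictable bounded coefficients $(\rho,\alpha,\beta,\gamma)$ and a nonnegative forcing term $\phi$, invoke the explicit linear-BSDEBM representation, and conclude via positivity of the Dol\'eans--Dade exponential once the jump condition of Lemma~\ref{condition for beta} is verified. Your two sub-cases for the $Z_2$-increment correspond to the paper's Cases~3 and~2 respectively, and the use of Lemmas~\ref{z norm is 0 iff z is 0}, \ref{epsilon inequality for z norm not 0}, \ref{moore penrose property} and~\ref{psi ej} is the same.

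There is, however, one genuine gap. The paper first branches on the sign of
\[
\Delta_u:=F^1(u,Y^2_u,Z_1^2(u),Z_2^1(u),X_u)-F^1(u,Y^2_u,Z_1^2(u),Z_2^2(u),X_u):
\]
when $\Delta_u\ge0$ it sets $\beta_u=0$ and absorbs $\Delta_u$ into the nonnegative term $\phi_u$ (its Case~1), so that Cases~2 and~3 only ever treat $\Delta_u<0$. You omit this branch, and in your second sub-case (some $j_0$ violates the $\epsilon$-inequality) this matters. If $\Delta_u>0$ then, since $\beta_u\hat Z_2(u)<0$, the scalar $\gamma_u=\Delta_u/(\beta_u\hat Z_2(u))$ is \emph{negative}; your implication ``$\beta_u\psi_u^+(e_j-X_{u-})\ge0$, so the jump condition holds'' then fails, because $\gamma_u\beta_u\psi_u^+(e_{j_0}-X_{u-})$ is negative and its magnitude is controlled only by $\sqrt\mu/(c\epsilon)$, not by $a_{j_0 i}$, so it can drop below $-1$. (In your first sub-case the sign of $\Delta_u$ is harmless because hypothesis~(3) directly gives $\gamma_u<1$, but in the second sub-case hypothesis~(3) is unavailable and you implicitly need $\gamma_u\ge0$.) The fix is exactly the paper's: first dispose of $\Delta_u\ge0$ by moving it into $\phi$, and then run your two sub-cases under the standing assumption $\Delta_u<0$, which forces $\gamma_u>0$ in the second sub-case and makes your nonnegativity argument valid.
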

	\begin{proof}
		Given the two solutions $(Y^1,Z^1_1,Z^1_2)$ and $(Y^2,Z^2_1,Z^2_2)$, we can write
		\begin{align}\label{bsde difference}
			Y_t^1-Y_t^2&=Q^1-Q^2\nonumber\\
			&\quad+\int_t^T\left[F^1(u,Y^1_{u},Z^1_1,Z^1_2,X_u)-F^2(u,Y^2_{u},Z^2_1,Z^2_2,X_u)\right]du\nonumber\\
			&\quad-\int_t^T\left[Z^1_1(u)-Z^2_1(u)\right]dW_u-\int_t^T[Z^1_2(u)-Z^2_2(u)]^{\top}dM_u.
		\end{align}
		We assume that $\epsilon<c^{-3/2}N^{3/2}$. With the convention $0/0:=0$, we consider the following cases:
		
		(Case 1) Suppose $F^1(t,Y_t^2,Z_1^2(t),Z_2^1(t),X_t)-F^1(t,Y_t^2,Z_1^2(t),Z_2^2(t),X_t)\geq 0$. Then define
		\begin{align*}
			\phi_t&:=F^1(t,Y_t^2,Z_1^2(t),Z_2^1(t),X_t)-F^2(t,Y_t^2,Z_1^2(t),Z_2^2(t),X_t)\\
			\rho_t&:=\frac{F^1(t,Y_t^1,Z_1^2(t),Z_2^1(t),X_t)-F^1(t,Y_t^2,Z_1^2(t),Z_2^1(t),X_t)}{Y_t^1-Y_t^2}\\
			\alpha_t&:=\frac{F^1(t,Y_t^1,Z_1^1(t),Z_2^1(t),X_t)-F^1(t,Y_t^1,Z_1^2(t),Z_2^1(t),X_t)}{Z_1^1(X_t)-Z_1^2(X_t)}\\
			\beta_t&:=0\\
			\gamma_t&:=1.
		\end{align*}
		Here, $0\in\mathbb{R}^{1\times N}$. Since $\phi_t$ can be written as
		\begin{align*}
			\phi_t&=F^1(t,Y_t^2,Z_1^2(t),Z_2^2(t),X_t)-F^2(t,Y_t^2,Z_1^2(t),Z_2^2(t),X_t)\\
			&\quad+F^1(t,Y_t^2,Z_1^2(t),Z_2^1(t),X_t)-F^1(t,Y_t^2,Z_1^2(t),Z_2^2(t),X_t),
		\end{align*}
		then by assumption (2), $\phi_t\geq 0$.
		
		(Case 2) Suppose $F^1(t,Y_t^2,Z_1^2(t),Z_2^1(t),X_t)-F^1(t,Y_t^2,Z_1^2(t),Z_2^2(t),X_t)<0$ and there exists some $j=1,\ldots,N$ such that
		\begin{align*}
			[Z_2^1(t)-Z_2^2(t)]^{\top}\psi_te_j<-\epsilon\lVert Z_2^1(t)-Z_2^2(t)\rVert_{X_{t-}}.
		\end{align*}
		Then define
		\begin{align*}
			\phi_t&:=F^1(t,Y_t^2,Z_1^2(t),Z_2^2(t),X_t)-F^2(t,Y_t^2,Z_1^2(t),Z_2^2(t),X_t)\\
			\rho_t&:=\frac{F^1(t,Y_t^1,Z_1^2(t),Z_2^1(t),X_t)-F^1(t,Y_t^2,Z_1^2(t),Z_2^1(t),X_t)}{Y_t^1-Y_t^2}\\
			\alpha_t&:=\frac{F^1(t,Y_t^1,Z_1^1(t),Z_2^1(t),X_t)-F^1(t,Y_t^1,Z_1^2(t),Z_2^1(t),X_t)}{Z_1^1(t)-Z_1^2(t)}\\
			\beta_t&:=\frac{F^1(t,Y_t^2,Z_1^2(t),Z_2^1(t),X_t)-F^1(t,Y_t^2,Z_1^2(t),Z_2^2(t),X_t)}{[Z_2^1(t)-Z_2^2(t)]^{\top}\psi_te_j}e_j^{\top}\psi_t\\
			\gamma_t&:=1.
		\end{align*}
		It follows directly from assumption (2) that $\phi_t\geq 0$.
		
		(Case 3) Suppose $F^1(t,Y_t^2,Z_1^2(t),Z_2^1(t),X_t)-F^1(t,Y_t^2,Z_1^2(t),Z_2^2(t),X_t)<0$ and for all $j=1,\ldots,N$,
		\begin{align*}
			[Z_2^1(t)-Z_2^2(t)]^{\top}\psi_te_j\geq-\epsilon\lVert Z_2^1(t)-Z_2^2(t)\rVert_{X_{t-}}.
		\end{align*}
		Then define
		\begin{align*}
			\phi_t&:=F^1(t,Y_t^2,Z_1^2(t),Z_2^2(t),X_t)-F^2(t,Y_t^2,Z_1^2(t),Z_2^2(t),X_t)\\
			\rho_t&:=\frac{F^1(t,Y_t^1,Z_1^2(t),Z_2^1(t),X_t)-F^1(t,Y_t^2,Z_1^2(t),Z_2^1(t),X_t)}{Y_t^1-Y_t^2}\\
			\alpha_t&:=\frac{F^1(t,Y_t^1,Z_1^1(t),Z_2^1(t),X_t)-F^1(t,Y_t^1,Z_1^2(t),Z_2^1(t),X_t)}{Z_1^1(t)-Z_1^2(t)}\\
			\beta_t&:=\frac{F^1(t,Y_t^2,Z_1^2(t),Z_2^1(t),X_t)-F^1(t,Y_t^2,Z_1^2(t),Z_2^2(t),X_t)}{[Z_2^1(t)-Z_2^2(t)]^{\top}\psi_tX_{t-}}X_{t-}^{\top}\psi_t\\
			\gamma_t&:=1.
		\end{align*}
		It also follows directly from assumption (2) that $\phi_t\geq 0$.
		
		Write $\delta Y=Y^1-Y^2$, $\delta Z_1=Z_1^1-Z_1^2$, and $\delta Z_2=Z_2^1-Z_2^2$. Then for all three cases, we can write \eqref{bsde difference} as
		\begin{align*}
			\delta Y_t&=Q^1-Q^2+\int_t^T\left[\phi_u+\rho_u\delta Y_u +\alpha_u \delta Z_1(u)+\gamma_u\beta_u\delta Z_2(u)\right]du\nonumber\\
			&\quad+\int_t^T\delta Z_1(u)dW_u+\int_t^T\delta Z_2(u)^{\top}dM_u.
		\end{align*}
		
		Since $F^i$ is standard for $i=1,2$, then $E[\int_0^T|\phi_t|^2dt]<\infty$ for all three cases. In addition, $\rho_t$ and $\alpha_t$ are $dt\times P$-a.s. bounded by Lipschitz continuity, and trivially $\gamma_t$ is bounded for all three cases.
		
		In Case 1, $\beta_t=0$ and hence $\beta_t$ is bounded. In Case 2, by assumption,
		\begin{align*}
			\left|[Z_2^1(t)-Z_2^2(t)]^{\top}\psi_te_j\right|>\epsilon\lVert Z_2^1(t)-Z_2^2(t)\rVert_{X_{t-}}.
		\end{align*}
		Hence,
		\begin{align*}
			|\beta_t|<\frac{|F^1(t,Y_t^2,Z_1^2(t),Z_2^1(t),X_t)-F^1(t,Y_t^2,Z_1^2(t),Z_2^2(t),X_t)|}{\epsilon\lVert Z_2^1(t)-Z_2^2(t)\rVert_{X_{t-}}}|e_j^{\top}\psi_t|,
		\end{align*}
		which implies that $\beta_t$ is bounded by Lipschitz continuity.
		
		In Case 3, since $F^1(t,Y_t^2,Z_1^2(t),Z_2^1(t),X_t)\neq F^1(t,Y_t^2,Z_1^2(t),Z_2^2(t),X_t)$ by assumption, then $Z_2^1(t)\neq Z_2^2(t)$. By Lemma \ref{z norm is 0 iff z is 0}, $\lVert Z_2^1(t)-Z_2^2(t) \rVert_{X_{t-}}\neq 0$. In addition, by Lemma \ref{epsilon inequality for z norm not 0},
		\begin{align*}
			[Z_2^1(t)-Z_2^2(t)]^{\top}\psi_tX_{t-}]<-\epsilon\lVert Z_2^1(t)-Z_2^2(t)\rVert_{X_{t-}}.
		\end{align*}
		Similarly, $\beta_t$ in this case is also bounded by Lipschitz continuity.
		
		By Lemma \ref{condition for beta}, it suffices to show that $\beta_u\psi_u^+(e_k-X_{u-})>-1$ for all $k$ such that $e_k^{\top}AX_{u-}>0$. In Case 1, $\gamma_u\beta_u\psi_u^+(e_k-X_{u-})=0>-1$.
		
		In Case 2, by Lemma \ref{moore penrose property},
		\begin{align*}
			\beta_t\psi_t^+(e_k-X_{t-})&=\frac{F^1(t,Y_t^2,Z_1^2(t),Z_2^1(t),X_t)-F^1(t,Y_t^2,Z_1^2(t),Z_2^2(t),X_t)}{[Z_2^1(t)-Z_2^2(t)]^{\top}\psi_te_j}\\
			&\quad\times e_j^{\top}\psi_t\psi_t^+(e_k-X_{t-})\\
			&=\frac{F^1(t,Y_t^2,Z_1^2(t),Z_2^1(t),X_t)-F^1(t,Y_t^2,Z_1^2(t),Z_2^2(t),X_t)}{[Z_2^1(t)-Z_2^2(t)]^{\top}\psi_te_j}\\
			&\quad\times e_j^{\top}(e_k-X_{t-})\\
		\end{align*}
		If $k\neq j$, then $e_j^{\top}(e_k-X_{t-})=0$. Otherwise, $e_j^{\top}(e_k-X_{t-})=1$. By assumption,
		\begin{align*}
			\frac{F^1(t,Y_t^2,Z_1^2(t),Z_2^1(t),X_t)-F^1(t,Y_t^2,Z_1^2(t),Z_2^2(t),X_t)}{[Z_2^1(t)-Z_2^2(t)]^{\top}\psi_te_j}>0.
		\end{align*}
		Hence, $\beta_t\psi_t^+(e_k-X_{t-})\geq 0$.
		
		In Case 3, by Lemma \ref{moore penrose property}, Lemma \ref{psi ej}, and assumption (3),
		\begin{align*}
			\beta_t\psi_t^+(e_k-X_{t-})&=\frac{F^1(t,Y_t^2,Z_1^2(t),Z_2^1(t),X_t)-F^1(t,Y_t^2,Z_1^2(t),Z_2^2(t),X_t)}{[Z_2^1(t)-Z_2^2(t)]^{\top}\psi_tX_{t-}}\\
			&\quad\times X_{t-}^{\top}\psi_t\psi_t^+(e_k-X_{t-})\\
			&=\frac{F^1(t,Y_t^2,Z_1^2(t),Z_2^1(t),X_t)-F^1(t,Y_t^2,Z_1^2(t),Z_2^2(t),X_t)}{[Z_2^1(t)-Z_2^2(t)]^{\top}\psi_te_j}\\
			&\quad\times X_{t-}^{\top}(e_k-X_{t-})\\
			&=-\frac{F^1(t,Y_t^2,Z_1^2(t),Z_2^1(t),X_t)-F^1(t,Y_t^2,Z_1^2(t),Z_2^2(t),X_t)}{[Z_2^1(t)-Z_2^2(t)]^{\top}\psi_tX_{t-}}\\
			&>-\frac{[Z_2^1(t)-Z_2^2(t)]^{\top}\psi_tX_{t-}}{[Z_2^1(t)-Z_2^2(t)]^{\top}\psi_tX_{t-}}=-1.
		\end{align*}
		
		Since $Q^1-Q^2\geq 0$, it then follows from Lemma \ref{condition for beta} that $Y_t^1-Y_t^2=\delta Y_t\geq 0$.
	\end{proof}
	
	\section{Sublinear Expectations and Bid-Ask Pricing}\label{sublinear evaluations and expectations}
	
	In this section, we define filtration consistent sublinear evaluations and expectations. Sublinear expectations are then shown to be solutions of the BSDEBM in \eqref{bsde}. Representations for the bid and ask prices are also presented.
	
	\begin{mydef}[Sublinear Evaluations]
		A system of operators
		\begin{align*}
			\mc{E}_{s,t}:L^2(\mc{F}_t)\to L^2(\mc{F}_s),\quad 0\leq s\leq t\leq T
		\end{align*}
		is called an $\mc{F}_t$-consistent sublinear evaluation for $\{\mc{Q}_{s,t}\subset L^2(\mc{F}_t)|0\leq s\leq t\leq T\}$ if it satisfies the following properties:
		\begin{enumerate}
			\item For $Q^1,Q^2\in\mc{Q}_{s,t}$, if $Q^1\geq Q^2$ $P$-a.s., then $\mc{E}_{s,t}(Q^1)\geq\mc{E}_{s,t}(Q^2)$ $P$-a.s., with equality iff $Q^1=Q^2$ $P$-a.s.
			\item $\mc{E}_{t,t}(Q)=Q$ $P$-a.s.
			\item For $r\leq s\leq t$, $\mc{E}_{r,s}(\mc{E}_{s,t}(Q))=\mc{E}_{r,t}(Q)$ $P$-a.s.
			\item For any $A\in\mc{F}_s$, $1_A\mc{E}_{s,t}(Q)=1_A\mc{E}_{s,t}(1_AQ)$ $P$-a.s.
			\item For $Q^1,Q^2\in\mc{Q}_{s,t}$, $\mc{E}_{s,t}(Q^1+Q^2)\leq\mc{E}_{s,t}(Q^1)+\mc{E}_{s,t}(Q^2)$ $P$-a.s.
			\item For $0\leq\lambda\in L^2(\mc{F}_s)$ and $\lambda Q\in\mc{Q}_{s,t}$, $\mc{E}_{s,t}(\lambda Q)=\lambda\mc{E}_{s,t}(Q)$ $P$-a.s.
		\end{enumerate} 
	\end{mydef}
	
	\begin{mydef}[Balanced Drivers]
		Suppose the family of sets $\{\mc{Q}_{s,t}\subset L^2(\mc{F}_t)\}$ are nondecreasing in $s$. For a fixed driver $F$, suppose further that, for all $s\leq t\leq T$, condition $(3)$ of Theorem \ref{comparison theorem} holds on $(s,t]$ for all $Q^1,Q^2\in\mc{Q}_{s,t}$. Then $F$ is called a balanced driver on $\{\mc{Q}_{s,t}\}$. 
	\end{mydef}
	
	\begin{myth}\label{sublinear evaluation theorem}
		Fix a balanced driver $F$. Consider a collection of sets $\{\mc{Q}_{s,t}\subset L^2(\mc{F}_t)\}$ with $\mc{Q}_{r,s}\subseteq\mc{Q}_{s,t}$ for all $r\leq s\leq t$. For $s\leq t$, define the conditional $F$-evaluation by
		\begin{align*}
			\mc{E}^F_{s,t}(Q)=Y_s,
		\end{align*}
		where $Y_s$ is the solution to
		\begin{align*}		
			Y_s-\int_s^tF(u,Y_u,Z_1(u),Z_2(u),X_u)du+\int_s^tZ_1(u)dW_u+\int_s^tZ^{\top}_2(u)dM_u=Q.
		\end{align*}
		Suppose $F$ satisfies the following assumptions for each $(t,x)\in[0,T]\times\mc{S}$:
		\begin{enumerate}
			\item[i.] (Subadditivity)
			\begin{align*}
				&F(t,Y^1_{t-}+Y^2_{t-},Z_1^1(t)+Z_1^2(t),Z_2^1(t)+Z_2^2(t),x)\\
				&\leq F(t,Y^1_{t-},Z_1^1(t),Z_2^1(t))+F(t,Y^2_{t-},Z_1^2(t),Z_2^2(t),x)
			\end{align*}
			\item[ii.] (Positive Homogeneity) For $\lambda\geq 0$,
			\begin{align*}
				F(t,\lambda Y_{t-},\lambda Z_1(t),\lambda Z_2(t),x)&= \lambda F(t,Y_{t-},Z_1(t),Z_2(t),x).
			\end{align*}
		\end{enumerate}
		Then $\mc{E}^F_{s,t}$ is an $\mc{F}_t$-consistent sublinear evaluation for $\{\mc{Q}_{s,t}\}$.
	\end{myth}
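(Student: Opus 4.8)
The plan is to verify properties (1)--(6) of an $\mc{F}_t$-consistent sublinear evaluation one at a time, leaning on the existence/uniqueness result (Theorem \ref{existence and uniqueness}), the comparison theorem (Theorem \ref{comparison theorem}), and the linear representation proposition. First, Theorem \ref{existence and uniqueness} guarantees $\mc{E}^F_{s,t}(Q)=Y_s$ is well defined as the $Y$-component of the unique solution on $[s,t]$; the monotonicity property (1) is exactly the conclusion of Theorem \ref{comparison theorem}, since $F$ is assumed to be a balanced driver, so conditions (2) and (3) of that theorem hold on $(s,t]$, and $Q^1\ge Q^2$ supplies condition (1); the strictness ``equality iff $Q^1=Q^2$'' should follow from the strict form of the comparison theorem together with the uniqueness statement (if $Y^1_s=Y^2_s$ on a set, run the comparison argument backward/forward to force $Q^1=Q^2$). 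Property (2), $\mc{E}_{t,t}(Q)=Q$, is immediate from the degenerate BSDEBM on $[t,t]$.

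Next I would handle the tower property (3), $\mc{E}^F_{r,s}(\mc{E}^F_{s,t}(Q))=\mc{E}^F_{r,t}(Q)$. The idea is the standard flow/concatenation argument for BSDEs: solve \eqref{bsde} on $[s,t]$ with terminal value $Q$ to get $(Y,Z_1,Z_2)$ on $[s,t]$, then solve on $[r,s]$ with terminal value $Y_s$; concatenating the two triples yields a solution of \eqref{bsde} on $[r,t]$ with terminal value $Q$, and by the uniqueness in Theorem \ref{existence and uniqueness} this concatenation equals the solution obtained directly on $[r,t]$. Reading off the value at time $r$ gives (3). Property (4), the local property $1_A\mc{E}_{s,t}(Q)=1_A\mc{E}_{s,t}(1_AQ)$ for $A\in\mc{F}_s=\mc{G}_s$, is proved by multiplying the BSDEBM through by $1_A$: since $1_A$ is $\mc{G}_s$-measurable it passes inside the $du$, $dW$, and $dM$ integrals over $[s,t]$, and $F(t,0,0,0,x)$ contributes nothing on $A^c$ once one uses positive homogeneity with $\lambda=0$ (which forces $F(t,0,0,0,x)=0$); hence $(1_AY_u,1_AZ_1(u),1_AZ_2(u))$ solves the BSDEBM with terminal value $1_AQ$, and uniqueness identifies it with the solution driven by $1_AQ$.

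For property (5), subadditivity, let $(Y^i,Z_1^i,Z_2^i)$ solve \eqref{bsde} with terminal data $Q^i$, $i=1,2$, and set $(\bar Y,\bar Z_1,\bar Z_2)=(Y^1+Y^2,Z_1^1+Z_1^2,Z_2^1+Z_2^2)$. This triple satisfies a BSDEBM with terminal value $Q^1+Q^2$ and driver $\bar F(u,\omega):=F(u,Y^1_u,Z_1^1,Z_2^1,X_u)+F(u,Y^2_u,Z_1^2,Z_2^2,X_u)$; by assumption (i) this dominates $F(u,\bar Y_u,\bar Z_1,\bar Z_2,X_u)$ pointwise. Now apply the comparison theorem with the pair $(F,Q^1+Q^2)$ against the pair $(\bar F,Q^1+Q^2)$: equal terminal values, and $\bar F\ge F$ evaluated along the second solution, so $\bar Y\ge$ the solution with driver $F$, i.e.\ $Y^1+Y^2\ge \mc{E}^F_{s,t}(Q^1+Q^2)$; taking the value at $s$ gives (5). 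One must check the hypotheses of the comparison theorem apply to this comparison — in particular that $(\bar F,Q^1+Q^2)$ is a standard pair and that condition (3) still holds; this is where the ``balanced driver'' assumption and the remarks on standardness do the work. Property (6), positive homogeneity, is the same kind of argument but cleaner: for $\lambda\ge 0$ in $L^2(\mc{F}_s)$, the triple $(\lambda Y_u,\lambda Z_1(u),\lambda Z_2(u))$ on $[s,t]$ solves the BSDEBM with terminal value $\lambda Q$ and driver $\lambda F(u,Y_u,Z_1,Z_2,X_u)=F(u,\lambda Y_u,\lambda Z_1,\lambda Z_2,X_u)$ by assumption (ii) (here one uses that $\lambda$ is $\mc{G}_s$-measurable so it commutes with the stochastic integrals over $[s,t]$), and uniqueness gives $\mc{E}^F_{s,t}(\lambda Q)=\lambda\mc{E}^F_{s,t}(Q)$.

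I expect the main obstacle to be the careful handling of the strict monotonicity in (1) and, relatedly, making sure the comparison theorem is genuinely applicable in the subadditivity step: the comparison theorem was stated for two \emph{standard} pairs satisfying its conditions (1)--(3), and one must confirm that the auxiliary driver $\bar F$ formed from a sum of evaluations along fixed solution processes is itself standard (Lipschitz in its arguments, the integrability condition (iii), left-continuity in $t$) and that the balanced-driver hypothesis transfers to the comparison being invoked. A secondary subtlety is the measurability bookkeeping in (4) and (6): one needs $A\in\mc{G}_s$ (resp.\ $\lambda\in L^2(\mc{F}_s)=L^2(\mc{G}_s)$) so that indicator/scalar multiplication commutes with the It\^o and martingale integrals on $[s,t]$, which is why the filtration index in the definition must be the starting time $s$. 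Everything else is a direct consequence of uniqueness (Theorem \ref{existence and uniqueness}) plus the comparison theorem, applied in the manner sketched above.
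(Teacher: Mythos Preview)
Your proposal is correct and follows essentially the same strategy as the paper: verify the six axioms one by one, using Theorem \ref{existence and uniqueness} for well-posedness and for properties (2), (3), (4), (6), and Theorem \ref{comparison theorem} (via the balanced-driver hypothesis together with subadditivity of $F$) for properties (1) and (5). The only minor deviation is in property (4), where the paper multiplies both BSDEBMs by $1_A$ and compares them as equations with the modified driver $1_AF$, whereas you invoke $F(t,0,0,0,x)=0$ (obtained from positive homogeneity with $\lambda=0$) to identify $(1_AY,1_AZ_1,1_AZ_2)$ directly as the solution with driver $F$ and terminal value $1_AQ$; both routes conclude via uniqueness.
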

	\begin{proof}
		It suffices to show that $\mc{E}^F_{s,t}$ satisfies the six properties of an $\mc{F}_t$-consistent sublinear evaluation.
		\begin{enumerate}
			\item Suppose $Q^1\geq Q^2$. Since the driver $F$ is balanced, then by Theorem \ref{comparison theorem},
			\begin{align*}
				\mc{E}^F_{s,t}(Q^1)=Y^1_s\geq Y^2_s=\mc{E}^F_{s,t}(Q^2).
			\end{align*}
			
			\item By definition,
			\begin{align*}
				\mc{E}^F_{t,t}(Q)=Y_t&=Q.
			\end{align*}
			
			\item For any $r\leq s\leq t$, let $Y$ be the solution to the relevant BSDEBM. Then,
			\begin{align*}
				Y_s&=Y_r-\int_r^sF(u,Y_u,Z_1(X),Z_2(u),X_u)du+\int_r^sZ_1(u)dW_u\\
				&\quad+\int_r^sZ^{\top}_2(u)dM_u.
			\end{align*}
			Hence, $\mc{E}^F_{r,s}(Y_s)=Y_r$. By definition, we then have
			\begin{align*}
				\mc{E}^F_{r,s}(\mc{E}^F_{s,t}(Q))=\mc{E}^F_{r,t}(Q).
			\end{align*}
			
			\item Let $A\in\mc{F}_s$. By definition, $\mc{E}^F_{s,t}(Q)$ and $\mc{E}^F_{s,t}(1_AQ)$ are the respective solutions to
			\begin{align*}
				Q&=Y_s-\int_s^tF(u,Y_u,Z_1(u),Z_2(u),X_u)du+\int_s^tZ_1(u)dW_u\\
				&\quad+\int_s^tZ^{\top}_2(u)dM_u
			\end{align*}
			and
			\begin{align*}
				1_AQ&=Y_s-\int_s^tF(u,Y_u,Z_1(u),Z_2(u),X_u)du+\int_s^tZ_1(u)dW_u\\
				&\quad+\int_s^tZ^{\top}_2(u)dM_u.
			\end{align*}
			Premultiplying the first equation by $1_A$ yields a BSDEBM with terminal value $1_AQ$, solution $1_A\mc{E}^F_{s,t}(Q)$, and driver $1_AF$. Doing the same thing to the second equation yields a BSDEBM with terminal value $1_AQ$, solution $1_A\mc{E}^F_{s,t}(1_AQ)$, and driver $1_AF$. Now by Theorem \ref{existence and uniqueness}, the solution of the BSDEBM is unique. That is,
			\begin{align*}
				1_A\mc{E}^F_{s,t}(Q)=1_A\mc{E}^F_{s,t}(1_AQ).
			\end{align*}
			
			\item Consider two BSDEBMs with terminal conditions $Q^1$ and $Q^2$. Then,
			\begin{align*}
				Q^1+Q^2&=Y^1_s+Y^2_s\\
				&\quad-\int_s^t\left[F(u,Y^1_{u},Z^1_1,Z^1_2,X_u)+F(u,Y^2_{u},Z^2_1,Z^2_2,X_u)\right]du\\
				&\quad+\int_s^t\left[Z^1_1(u)+Z^2_1(u)\right]dW_u+\int_s^t\left[Z^1_2(u)+Z^2_2(u)\right]^{\top}dM_u
			\end{align*}
			We can then say that the above BSDEBM has terminal condition $Q^1+Q^2$, solution $\mc{E}^F_{s,t}(Q^1)+\mc{E}^F_{s,t}(Q^2)$, and driver $F(t,Y^1_{t-},Z^1_1(t),Z^1_2(t),X_t)+F(t,Y^2_{t-},Z^2_1(t),Z^2_2(t),X_t)$. Consider another BSDEBM with terminal condition $Q^1+Q^2$ and driver $F(t,Y^1_{t-}+Y^2_{t-},Z_1^1(t)+Z_1^2(t),Z_2^1(t)+Z_2^2(t),X_t)$. Denote the solution to this BSDEBM by $\mc{E}^F_{s,t}(Q^1+Q^2)$. The three conditions under Theorem \ref{comparison theorem} are satisfied. Condition (1) is trivially satisfied, condition (2) is satisfied by the subadditivity of $F$, and condition (3) is satisfied by the balanced driver assumption. Thus by Theorem \ref{comparison theorem},
			\begin{align*}
				\mc{E}^F_{s,t}(Q^1+Q^2)\leq\mc{E}^F_{s,t}(Q^1)+\mc{E}^F_{s,t}(Q^2).
			\end{align*}
			
			\item By definition, $\mc{E}^F_{s,t}(Q)$ is the solution to
			\begin{align*}
				Q&=Y_s-\int_s^tF(u,Y_u,Z_1(u),Z_2(u),X_u)du+\int_s^tZ_1(u)dW_u\\
				&\quad+\int_s^tZ^{\top}_2(u)dM_u.
			\end{align*}
			Multiplying by $\lambda\geq 0$, with $\lambda\in L^2(\mc{F}_s)$, and by the positive homogeneity assumption,
			\begin{align*}
				\lambda Q&=\lambda Y_s-\int_s^t\lambda F(u,Y_u,Z_1(u),Z_2(u),X_u)du+\int_s^t\lambda Z_1(u)dW_u\\
				&\quad+\int_s^t\lambda Z^{\top}_2(u)dM_u\\
				&=\lambda Y_s-\int_s^tF(u,\lambda Y_u,\lambda Z_1(u),\lambda Z_2(u),X_u)du+\int_s^t\lambda Z_1(u)dW_u\\
				&\quad+\int_s^t\lambda Z^{\top}_2(u)dM_u.
			\end{align*}
			We can then say that this BSDEBM has terminal condition $\lambda Q$ and solution $\lambda\mc{E}^F_{s,t}(Q)$. By Theorem \ref{existence and uniqueness},
			\begin{align*}
				\mc{E}^F_{s,t}(Q)=\lambda\mc{E}^F_{s,t}(Q).
			\end{align*}
		\end{enumerate}
	\end{proof}	
	
	\begin{mydef}[Sublinear Expectations]
		A system of operators
		\begin{align*}
			\mc{E}(\cdot|\mc{F}_t):L^2(\mc{F}_T)\to L^2(\mc{F}_t),\quad 0\leq t\leq T
		\end{align*}
		is called an $\mc{F}_t$-consistent sublinear expectation for $\{\mc{Q}_{t}\subset L^2(\mc{F}_T)|0\leq t\leq T\}$ if it satisfies the following properties:
		\begin{enumerate}
			\item For $Q^1,Q^2\in\mc{Q}_{t}$, if $Q^1\geq Q^2$ $P$-a.s., then $\mc{E}(Q^1|\mc{F}_t)\geq\mc{E}(Q^2|\mc{F}_t)$ $P$-a.s., with equality iff $Q^1=Q^2$ $P$-a.s.
			\item For any $\mc{F}_t$-measurable $Q$, $\mc{E}(Q|\mc{F}_t)=Q$ $P$-a.s.
			\item For $s\leq t$, $\mc{E}(\mc{E}(Q|\mc{F}_t)|\mc{F}_s)=\mc{E}(Q|\mc{F}_s)$ $P$-a.s.
			\item For any $A\in\mc{F}_t$, $1_A\mc{E}(Q|\mc{F}_t)=\mc{E}(1_AQ|\mc{F}_t)$ $P$-a.s.
			\item For $Q^1,Q^2\in\mc{Q}_{t}$, $\mc{E}(Q^1+Q^2|\mc{F}_t)\leq\mc{E}(Q^1|\mc{F}_t)+\mc{E}(Q^2|\mc{F}_t)$ $P$-a.s.
			\item For $0\leq\lambda\in L^2(\mc{F}_t)$ and $\lambda Q\in\mc{Q}_{t}$, $\mc{E}(\lambda Q|\mc{F}_t)=\lambda\mc{E}(Q|\mc{F}_t)$ $P$-a.s.
		\end{enumerate} 
	\end{mydef}	
	
	\begin{mylm}
		Any sublinear expectation is also a sublinear evaluation.
	\end{mylm}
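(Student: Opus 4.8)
The plan is to produce a sublinear evaluation directly from the given sublinear expectation by restricting the terminal time. Given an $\mc{F}_t$-consistent sublinear expectation $\mc{E}(\cdot\,|\,\mc{F}_t):L^2(\mc{F}_T)\to L^2(\mc{F}_t)$ for $\{\mc{Q}_t\subset L^2(\mc{F}_T)\}$, I would define, for each pair $0\le s\le t\le T$, the operator $\mc{E}_{s,t}:L^2(\mc{F}_t)\to L^2(\mc{F}_s)$ by $\mc{E}_{s,t}(Q):=\mc{E}(Q\,|\,\mc{F}_s)$. This is well posed because $\mc{F}_t\subseteq\mc{F}_T$, so every $Q\in L^2(\mc{F}_t)$ already lies in the domain $L^2(\mc{F}_T)$ of the expectation, and $\mc{E}(Q\,|\,\mc{F}_s)$ is automatically $\mc{F}_s$-measurable, matching the required codomain. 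As admissible families one takes $\mc{Q}_{s,t}:=\{Q\in\mc{Q}_s:\ Q\text{ is }\mc{F}_t\text{-measurable}\}$, which are nested as required since the $\mc{Q}_t$ are monotone. The whole problem then reduces to verifying the six defining properties of a sublinear evaluation, each inherited from the correspondingly numbered property of the sublinear expectation.

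The bulk of the properties transfer with essentially no work. Monotonicity with the strictness clause (property 1) is immediate from property 1 of the expectation; $\mc{E}_{t,t}(Q)=\mc{E}(Q\,|\,\mc{F}_t)=Q$ for $\mc{F}_t$-measurable $Q$ is property 2; the tower property follows by $\mc{E}_{r,s}(\mc{E}_{s,t}(Q))=\mc{E}(\mc{E}(Q\,|\,\mc{F}_s)\,|\,\mc{F}_r)=\mc{E}(Q\,|\,\mc{F}_r)=\mc{E}_{r,t}(Q)$ using property 3; subadditivity (property 5) is verbatim; and positive homogeneity (property 6), where the multiplier $0\le\lambda$ lies in $L^2(\mc{F}_s)$, is again verbatim from property 6 of the expectation applied with $\mc{F}_s$ in place of $\mc{F}_t$. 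The only property needing a one-line manipulation is the local property: for $A\in\mc{F}_s$ one has $1_A\mc{E}_{s,t}(Q)=1_A\mc{E}(Q\,|\,\mc{F}_s)=\mc{E}(1_AQ\,|\,\mc{F}_s)$ by property 4, and applying property 4 once more to $1_AQ$ gives $\mc{E}(1_AQ\,|\,\mc{F}_s)=1_A\mc{E}(1_AQ\,|\,\mc{F}_s)=1_A\mc{E}_{s,t}(1_AQ)$, which is the claimed identity.

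The only genuine subtlety — and what I expect to be the sole obstacle worth naming — is bookkeeping about the admissible sets: for the tower property to be meaningful one needs $\mc{E}(Q\,|\,\mc{F}_s)\in\mc{Q}_{r,s}$, and for properties 5 and 6 the sums $Q^1+Q^2$ and the scalar multiples $\lambda Q$ must remain in the relevant $\mc{Q}_{s,t}$. These are handled by the standing hypotheses on the families (nesting $\mc{Q}_{r,s}\subseteq\mc{Q}_{s,t}$ and closure under conditioning, mirroring what was assumed for the $F$-evaluation in Theorem \ref{sublinear evaluation theorem}), so no argument beyond pointing this out is required, and the lemma follows.
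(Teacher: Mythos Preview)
Your proposal is correct and takes exactly the same approach as the paper, which simply states that setting $\mc{E}_{s,t}(\cdot)=\mc{E}(\cdot\,|\,\mc{F}_s)$ for $s\le t$ proves the result. You have supplied the detailed verification of the six axioms that the paper leaves implicit, but the underlying idea is identical.
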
 
	\begin{proof}
		Setting $\mc{E}_{s,t}(\cdot)=\mc{E}(\cdot|\mc{F}_s)$ for $s\leq t$ proves the result.
	\end{proof}
	
	\begin{myth}
		Fix a balanced driver $F$ with $F(t,Y_{t-},0,0,x)=0$ for all $x\in\mc{S}$ $dt\times P$-a.s. Consider a collection of sets $\{\mc{Q}_{t}\subset L^2(\mc{F}_T)\}$ with $\mc{Q}_{s}\subseteq\mc{Q}_{t}$ for all $s\leq t$. Define the functional $\mc{E}^F(\cdot|\mc{F}_t)$ for each $t$ by
		\begin{align*}
			\mc{E}^F(Q|\mc{F}_t)=Y_t,
		\end{align*}
		where $Y_t$ is the solution to \eqref{bsde}. Suppose $F$ satisfies subaddivity and positive homogeneity. Then $\mc{E}^F(\cdot|\mc{F}_t)$ is an $\mc{F}_t$-consistent sublinear expectation for $\{\mc{Q}_{t}\}$.
	\end{myth}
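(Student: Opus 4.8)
The plan is to check directly the six defining properties of an $\mathcal{F}_t$-consistent sublinear expectation for the functional $\mathcal{E}^F(\cdot\,|\mathcal{F}_t)=\mathcal{E}^F_{t,T}(\cdot)$, since the preceding lemma only records that expectations are evaluations and not the converse. Four of the six verifications are, \emph{mutatis mutandis}, the ones already carried out in the proof of Theorem \ref{sublinear evaluation theorem} with the interval $[s,t]$ replaced by $[t,T]$; the genuinely new input is the normalization $F(u,Y_{u-},0,0,x)=0$, which is used exactly twice. First, for property (2): if $Q$ is $\mathcal{F}_t$-measurable then it is $\mathcal{G}_u$-measurable for every $u\in[t,T]$, and the triple $(Y_u,Z_1(u),Z_2(u))\equiv(Q,0,0)$ on $[t,T]$ lies in the solution space and solves \eqref{bsde} on $[t,T]$ because $F(u,Q,0,0,X_u)=0$ $du\times P$-a.s.; uniqueness (Theorem \ref{existence and uniqueness}) then forces $\mathcal{E}^F(Q\,|\mathcal{F}_t)=Y_t=Q$, and in particular $\mathcal{E}^F(0\,|\mathcal{F}_t)=0$.

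Properties (1), (5) and (6) are then copied from the proof of Theorem \ref{sublinear evaluation theorem}: (1) is the Comparison Theorem (Theorem \ref{comparison theorem}) applied with $F^1=F^2=F$ (balanced), the strictness clause coming from its condition (3) exactly as there; (5) combines the subadditivity of $F$ with the Comparison Theorem in the same way; and (6) multiplies \eqref{bsde} on $[t,T]$ by $\lambda\in L^2(\mathcal{F}_t)$, $\lambda\geq 0$, uses positive homogeneity of $F$ to identify the resulting equation as the BSDEBM with terminal value $\lambda Q$, and invokes uniqueness. For the flow property (3), given $s\leq t$ one observes that the solution of \eqref{bsde} on $[s,T]$ with terminal value $Q$, restricted to $[t,T]$, has $Y$-value $\mathcal{E}^F(Q\,|\mathcal{F}_t)$ at time $t$; running the BSDEBM on all of $[s,T]$ with that (now $\mathcal{F}_t$-measurable) terminal value, property (2) collapses the contribution of $[t,T]$ to the constant $\mathcal{E}^F(Q\,|\mathcal{F}_t)$, and then the concatenation argument on $[s,t]$ together with uniqueness yields $\mathcal{E}^F(\mathcal{E}^F(Q\,|\mathcal{F}_t)\,|\mathcal{F}_s)=\mathcal{E}^F(Q\,|\mathcal{F}_s)$.

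For property (4) --- which is slightly stronger than its evaluation analogue, since no factor $1_A$ is prefixed on the right --- fix $A\in\mathcal{F}_t$ and let $(Y,Z_1,Z_2)$ solve \eqref{bsde} on $[t,T]$ with terminal value $Q$. Multiplying the equation by $1_A$ and applying positive homogeneity of $F$ pointwise with $\lambda(\omega)=1_A(\omega)\in\{0,1\}$ (the case $\lambda=0$ being the normalization $F(u,0,0,0,x)=0$), one sees that $(1_AY,1_AZ_1,1_AZ_2)$ --- which is again in the solution space because $1_A$ is bounded and $\mathcal{F}_t$-measurable --- solves \eqref{bsde} on $[t,T]$ with terminal value $1_AQ$; uniqueness (Theorem \ref{existence and uniqueness}) then gives $1_A\mathcal{E}^F(Q\,|\mathcal{F}_t)=1_AY_t=\mathcal{E}^F(1_AQ\,|\mathcal{F}_t)$. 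The only point demanding care --- the ``main obstacle'', though a mild one --- is the bookkeeping in property (3): because $\mathcal{E}^F(\cdot\,|\mathcal{F}_s)$ acts on $L^2(\mathcal{F}_T)$ rather than on $L^2(\mathcal{F}_t)$, the inner evaluation must be propagated through the full interval $[s,T]$, and it is precisely the normalization hypothesis, via property (2), that makes this harmless and lets the $[s,t]$-concatenation of Theorem \ref{sublinear evaluation theorem} go through unchanged.
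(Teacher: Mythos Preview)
Your proposal is correct and follows the same overall strategy as the paper: verify the six axioms, importing (1), (5), (6) from Theorem~\ref{sublinear evaluation theorem} and treating (2), (3), (4) separately using the normalization $F(t,Y_{t-},0,0,x)=0$. The execution differs in a few places worth noting. For (2), the paper argues that taking a $\mathcal{G}_t$-conditional expectation of the BSDEBM forces $Z_1=Z_2=0$ and then invokes the normalization; you instead exhibit $(Q,0,0)$ directly as a solution and appeal to uniqueness, which is cleaner. For (4), the paper again reasons via conditional expectations to obtain $Z_i=1_AZ_i$ for the $1_AQ$-solution and then uses the normalization to match drivers; your route---multiplying the $Q$-equation by $1_A$ and applying positive homogeneity pointwise with $\lambda=1_A$---is shorter and makes the role of $F(u,0,0,0,x)=0$ more transparent. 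Finally, for (3) the paper simply says it follows ``exactly as in Theorem~\ref{sublinear evaluation theorem}'', whereas you correctly flag the bookkeeping issue that $\mathcal{E}^F(\cdot\,|\mathcal{F}_s)$ is defined via the BSDEBM on $[s,T]$ rather than $[s,t]$, and resolve it by using property (2) to collapse the $[t,T]$ segment; this is a genuine (if small) gap in the paper's cross-reference that your argument closes.
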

	\begin{proof}
		It suffices to show that $\mc{E}^F(\cdot|\mc{F}_t)$ satisfies the six properties of an $\mc{F}_t$-consistent sublinear expectation. Properties 1, 3, 5, and 6 follow exactly as in Theorem \ref{sublinear evaluation theorem}. 
		\begin{enumerate}
			\item[2.] If $Q$ is $\mc{F}_t$-measurable, then taking an $\mc{F}_t$ conditional expectation implies that the solution of \eqref{bsde} has $Z_1(u)=0$ $dt\times P$-a.s. and $Z_2(u)=0$ $d\langle M,M\rangle_u\times P$-a.s. Hence, for all $t\leq u\leq T$,
			\begin{align*}
				F(u,Y_{u-},Z_1(u),Z_2(u),X_u)=0
			\end{align*} 
			by assumption and therefore $Y_t=Q$ or $\mc{E}^F(Q|\mc{F}_t)=Q$.
			
			\item[4.] Let $A\in\mc{F}_t$. By definition, $1_A\mc{E}^F(Q|\mc{F}_t)$ and $\mc{E}^F(1_AQ|\mc{F}_t)$ are the respective solutions to
			\begin{align*}
				1_AQ&=Y_t-\int_t^T1_AF(u,Y_u,Z_1(u),Z_2(u),X_u)du+\int_t^T1_AZ_1(u)dW_u\\
				&\quad+\int_t^T1_AZ^{\top}_2(u)dM_u
			\end{align*}
			and
			\begin{align*}
				1_AQ&=Y_t-\int_t^TF(u,Y_u,Z_1(u),Z_2(u),X_u)du+\int_t^TZ_1(u)dW_u\\
				&\quad+\int_t^TZ^{\top}_2(u)dM_u.
			\end{align*}
			Taking an $\mc{F}_t$ conditional expectation to the second equation yields
			\begin{align*}
				Z_1(u)=1_AZ_1(u)\quad\mbox{and}\quad Z_2(u)=1_AZ_2(u)
			\end{align*}
			for all $t\leq u\leq T$. In addition, by assumption, $F(t,Y_{t-},0,0,x)=0$ for all $x\in\mc{S}$ and hence,
			\begin{align*}
				1_AF(u,Y_u,Z_1(u),Z_2(u),X_u)=F(u,Y_u,1_AZ_1(u),1_AZ_2(u),X_u).
			\end{align*}
			Thus, both equations solve the same BSDEBM, and by Theorem \ref{existence and uniqueness}, 
			\begin{align*}
				1_A\mc{E}^F(Q|\mc{F}_t)=\mc{E}^F(1_AQ|\mc{F}_t).
			\end{align*}
		\end{enumerate}
	\end{proof}
	
	\begin{myth}\label{sublinear as supremum}
		Suppose $\mc{Q}_t$ is a linear space of real-valued functions defined on $\Omega$. Then $\mc{E}(\cdot|\mc{F}_t)$ is a sublinear expectation on $\{\mc{Q}_t\}$ if and only if there exists a family of linear expectations $E_{\theta}(\cdot|\mc{F}_t):\mc{Q}_t\mapsto\mathbb{R}$. indexed by $\theta\in\Theta$, such that for $Q\in\mc{Q}_t$,
		\begin{align*}
			\mc{E}(Q|\mc{F}_t)=\sup_{\theta\in\Theta}E_{\theta}(Q|\mc{F}_t).
		\end{align*}
	\end{myth}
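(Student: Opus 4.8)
The plan is to prove the two implications separately. The converse (``only if'') direction is the substantive one and follows the classical Hahn--Banach route underlying Peng's representation of sublinear functionals; the forward (``if'') direction is a routine check that a supremum of linear expectations satisfies the six axioms of an $\mc{F}_t$-consistent sublinear expectation. For the ``only if'' direction, fix $t$ and, assuming as is implicit that $\mc{Q}_t$ contains the constants, view $\Lambda:=\mc{E}(\cdot\,|\mc{F}_t)$ as an operator on the linear space $\mc{Q}_t$. First I would record that $\Lambda$ is sublinear: subadditivity is property~5, positive homogeneity for deterministic $\lambda\ge 0$ is a special case of property~6, and $\Lambda(0)=0$ follows by taking $\lambda=0$ in property~6. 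Let $\Theta$ denote the set of all linear functionals $E$ on $\mc{Q}_t$ dominated by $\Lambda$, i.e.\ with $E(Q)\le\mc{E}(Q\,|\mc{F}_t)$ for all $Q\in\mc{Q}_t$, and put $\mc{E}_\Theta(Q):=\sup_{E\in\Theta}E(Q)$; the inequality $\mc{E}_\Theta(Q)\le\mc{E}(Q\,|\mc{F}_t)$ is then immediate.

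For the reverse inequality I would use Hahn--Banach. Given $Q_0\in\mc{Q}_t$, define $E_0(aQ_0):=a\,\mc{E}(Q_0\,|\mc{F}_t)$ on the line $\mathbb{R}Q_0$. For $a\ge 0$, domination by $\Lambda$ on this line is positive homogeneity; for $a<0$, after dividing out $|a|$ it reduces to $-\mc{E}(Q_0\,|\mc{F}_t)\le\mc{E}(-Q_0\,|\mc{F}_t)$, which is in turn a consequence of $0=\Lambda(0)=\Lambda(Q_0+(-Q_0))\le\Lambda(Q_0)+\Lambda(-Q_0)$. Extending $E_0$ to a $\Lambda$-dominated linear functional on all of $\mc{Q}_t$ yields a member of $\Theta$ that attains $\mc{E}(Q_0\,|\mc{F}_t)$ at $Q_0$, so $\mc{E}_\Theta(Q_0)\ge\mc{E}(Q_0\,|\mc{F}_t)$.

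It then remains to check that every $E\in\Theta$ is a linear \emph{expectation}. Monotonicity follows from property~1: if $Q\ge 0$ then $-Q\le 0$, so $E(-Q)\le\mc{E}(-Q\,|\mc{F}_t)\le\mc{E}(0\,|\mc{F}_t)=0$, whence $E(Q)\ge 0$. Normalization $E(c)=c$ for constants $c$ follows by applying property~2 to the $\mc{F}_t$-measurable function $c$ (giving $\mc{E}(c\,|\mc{F}_t)=c$) together with $E(c)\le c$ and $E(-c)\le -c$. Compatibility of $E$ with the conditioning on $\mc{F}_t$ — that it is a genuine $\mc{F}_t$-conditional expectation and respects $\mc{F}_t$-measurable multipliers — is forced by the locality property~4 and the $\mc{F}_t$-homogeneity property~6. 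Re-indexing the elements of $\Theta$ by $\theta\in\Theta$ then gives $\mc{E}(Q\,|\mc{F}_t)=\sup_{\theta\in\Theta}E_\theta(Q\,|\mc{F}_t)$.

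For the ``if'' direction, suppose $\mc{E}(Q\,|\mc{F}_t)=\sup_{\theta}E_\theta(Q\,|\mc{F}_t)$ with each $E_\theta(\cdot\,|\mc{F}_t)$ a linear expectation. The monotone part of property~1, property~2, the tower property~3, locality~4, and positive homogeneity~6 all pass to the supremum because each holds for every $E_\theta$; subadditivity~5 follows from $\sup_\theta\bigl(E_\theta(Q^1\,|\mc{F}_t)+E_\theta(Q^2\,|\mc{F}_t)\bigr)\le\sup_\theta E_\theta(Q^1\,|\mc{F}_t)+\sup_\theta E_\theta(Q^2\,|\mc{F}_t)$, and the strict half of property~1 uses the strict monotonicity of the constituent $E_\theta$. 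The step I expect to be the main obstacle is the conditional, rather than purely $\mathbb{R}$-valued, character of the operators: Hahn--Banach only delivers an $\mathbb{R}$-linear functional, and upgrading it to one compatible with the $\mc{F}_t$-module structure — a bona fide conditional linear expectation — is where care is required; the leverage for this is again properties~4 and~6, dictating how $\Lambda$-dominated functionals act on $\mc{F}_t$-measurable multipliers, with a pathwise/measurable-selection argument as an alternative. If, as the statement literally reads, one only asks for $\mathbb{R}$-valued $E_\theta$, this difficulty disappears and the converse is precisely Peng's classical representation theorem for sublinear expectations.
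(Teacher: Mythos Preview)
Your proposal is correct and, in fact, supplies far more detail than the paper does: the paper's own proof is a one-line citation to Theorem~1.2.1 of Peng's monograph on nonlinear expectations and Theorem~1.1 of a companion reference, with no argument given. The Hahn--Banach route you sketch is exactly the argument underlying those cited results, so your approach and the paper's (deferred) approach coincide. You also correctly flag the tension between the $\mathbb{R}$-valued codomain in the statement and the $L^2(\mc{F}_t)$-valued nature of $\mc{E}(\cdot\,|\mc{F}_t)$; the paper does not address this, and as written the result is really the unconditional Peng representation applied at each fixed $t$.
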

	\begin{proof}
		The proof follows directly from Theorem 1.2.1 of \cite{peng:sublinear} and Theorem 1.1 of \cite{ref2:sublinear}.
	\end{proof}
	
	Conic finance, pioneered in \cite{ref1:conic}, deals with determining the bid and ask prices, which are commonly known as the buying and selling prices, respectively. The formulation is as follows. As in \cite{ref2:conic}, any cone of acceptable claims, denoted by $\mc{A}$, can be defined by a convex set of probability measures $\mc{M}$ equivalent to $P$. That is,
	\begin{equation*}
		\mc{A}:=\{Z\in L^1(\Omega,\mathcal{F},P)|E^{Q}[Z]\geq 0,  \forall Q\in\mc{M} \},
	\end{equation*}	
	where $L^1(\Omega,\mathcal{F},P)$ is the set of integrable random variables. The bid price and the ask price of the claim $Z$ at time 0 are then respectively given by
	\begin{equation*}
		b(Z):=\inf_{Q\in \mc{M}}E^{Q}[Z] 
	\end{equation*}
	and
	\begin{equation*}
		a(Z):=\sup_{Q\in \mc{M}}E^{Q}[Z].
	\end{equation*}
	The bid and ask prices are formulated above as the infimum and supremum, respectively, of the expected value of a risk or claim over all probability measures equivalent to the real-world probability measure. Given this representation, we have the following characterization of bid and ask prices.
	
	\begin{mypr}
		The bid and ask prices can be represented as the solutions to the BSDEBMs with driver $F$ and respective terminal conditions $-Q$ and $Q$, and solutions $-\mc{E}(-Q|\mc{F}_t)$ and $\mc{E}(Q|\mc{F}_t)$.
	\end{mypr}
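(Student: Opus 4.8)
The plan is to read this proposition as a direct synthesis of the two representation results established just above, combined with the elementary duality $\inf = -\sup(-\,\cdot\,)$. First I would invoke the theorem that characterizes $\mc{E}^F(\cdot|\mc{F}_t)$ as an $\mc{F}_t$-consistent sublinear expectation: for a balanced driver $F$ satisfying subadditivity, positive homogeneity, and $F(t,Y_{t-},0,0,x)=0$, the operator $Q\mapsto\mc{E}^F(Q|\mc{F}_t)=Y_t$, where $(Y,Z_1,Z_2)$ is the (unique, by Theorem \ref{existence and uniqueness}) solution of \eqref{bsde} with terminal value $Q$, is a sublinear expectation on $\{\mc{Q}_t\}$. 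Taking $\mc{Q}_t$ to be a linear space (e.g.\ $L^2(\mc{F}_T)$, or a linear subspace containing the claims of interest), Theorem \ref{sublinear as supremum} then produces a family of linear expectations $\{E_\theta(\cdot|\mc{F}_t)\}_{\theta\in\Theta}$ with $\mc{E}^F(Q|\mc{F}_t)=\sup_{\theta\in\Theta}E_\theta(Q|\mc{F}_t)$.

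The next step is to identify the abstract index set $\Theta$ with the concrete family $\mc{M}$ of probability measures equivalent to $P$ that defines the acceptance cone $\mc{A}$. Each $E_\theta(\cdot|\mc{F}_t)$ is a linear, normalized, strictly monotone functional — normalization coming from property 2 of the sublinear expectation and strict monotonicity (equality iff $Q^1=Q^2$) from property 1, both passed down to the extremal linear pieces — so a Radon--Nikodym argument represents it as a conditional expectation under a probability measure equivalent to $P$. Setting $\mc{M}$ to be precisely this family, we obtain $\mc{E}^F(Q|\mc{F}_t)=\esssup_{R\in\mc{M}}E^R[Q|\mc{F}_t]$. Evaluated at $t=0$ this is $\sup_{R\in\mc{M}}E^R[Q]=a(Q)$, the ask price, and more generally the conditional ask-price process is $\mc{E}^F(Q|\mc{F}_t)=Y_t$, the solution of the BSDEBM with terminal condition $Q$.

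For the bid price I would apply the same representation to $-Q$ and use $\inf_{R\in\mc{M}}E^R[Q]=-\sup_{R\in\mc{M}}E^R[-Q]$, so that $b(Q)=-\mc{E}^F(-Q|\mc{F}_0)$, and the conditional bid-price process equals $-\mc{E}^F(-Q|\mc{F}_t)=-\widetilde{Y}_t$, where $(\widetilde{Y},\widetilde{Z}_1,\widetilde{Z}_2)$ is the solution of \eqref{bsde} with terminal condition $-Q$, again unique by Theorem \ref{existence and uniqueness}. This gives exactly the claimed representations: ask price $=\mc{E}(Q|\mc{F}_t)$ and bid price $=-\mc{E}(-Q|\mc{F}_t)$.

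The hard part — indeed the only nonroutine point — is the identification of $\Theta$ with $\mc{M}$: one must check that the extremal linear functionals supplied by Theorem \ref{sublinear as supremum} are genuinely (conditional) expectations under measures \emph{equivalent} to $P$, i.e.\ strictly positive and mass-one, rather than merely positive linear functionals. I expect this to follow from the strict monotonicity in property 1 of the sublinear expectation (which forbids any of the representing measures from charging zero mass on a positive-probability event) together with the ``tower'' property 3 and the projection property 4, but making this rigorous at the level of the family $\Theta$ is where the real content lies; everything else is bookkeeping around the supremum representation and the reflection $Q\mapsto-Q$.
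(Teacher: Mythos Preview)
Your approach is essentially the paper's own: the paper's proof is the single sentence ``The result follows from Theorem \ref{sublinear as supremum},'' i.e.\ invoke the supremum representation of the sublinear expectation and read off ask $=\mc{E}(Q|\mc{F}_t)$, bid $=-\mc{E}(-Q|\mc{F}_t)$ via $\inf=-\sup(-\,\cdot\,)$. You have supplied considerably more detail than the paper does, and you have also correctly isolated the genuinely nontrivial point---that the abstract linear functionals $E_\theta$ furnished by Theorem \ref{sublinear as supremum} are bona fide conditional expectations under measures equivalent to $P$---which the paper simply does not address.
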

	\begin{proof}
		The result follows from Theorem \ref{sublinear as supremum}.
	\end{proof}
	
	\section{Conclusion}
	The regime-switching BSDE \eqref{bsde} driven by Brownian motion and a Markov chain is shown to have a unique solution given by a triple $(Y,Z_1,Z_2)$ of $\mathbb{G}$-adapted processes. A comparison theorem also holds, given the assumptions for the driver $F$ and the terminal condition $Q$. The comparison result is then used to prove that the sublinear expectation of the form $\mc{E}^F(Q|\mc{F}_t)=Y_t$ is the solution to the BSDE \eqref{bsde} provided that the driver further satisfies subadditivity and positive homogeneity conditions. Following from the result that a sublinear expectation is the supremum of a family of linear expectations, the bid and ask prices are then represented as $-\mc{E}(-Q|\mc{F}_t)$ and $\mc{E}(Q|\mc{F}_t)$, respectively.
	
	Since control problems can be solved by considering BSDEs and some risk measures can be expressed as nonlinear expectations, future research could involve providing a connection among the these concepts given a certain set of dynamics for the regime-switching price process. Other dynamics such as regime-switching jump-diffusion systems or L\'{e}vy processes could also be explored.

	\section*{Acknowledgements}
	The authors would like to thank the Australian Research Council and the NSERC for continuing support.
	
	\bibliographystyle{siam}
	\bibliography{UpdatedReferences}

\end{document}